\DeclareMathOperator{\codim}{codim}
\DeclareMathOperator{\gal}{Gal}
\newcommand{\BibTeX}{{\scshape Bib}\kern-.08em\TeX}
\newcommand{\T}{\S\kern .15em\relax }
\newcommand{\AMS}{$\mathcal{A}$\kern-.1667em\lower.5ex\hbox
        {$\mathcal{M}$}\kern-.125em$\mathcal{S}$}
\DeclareMathOperator{\vol}{vol}
\DeclareMathOperator{\im}{Im}
\DeclareMathOperator{\spm}{Spm}
\DeclareMathOperator{\pr}{pr}
\DeclareMathOperator{\rg}{rk}
\DeclareMathOperator{\spec}{Spec}
\renewcommand{\P}{\mathbb{P}}
\newcommand{\wmu}{\widehat{\mu}}
\newcommand{\C}{\mathbb{C}}
\newcommand{\Q}{\mathbb{Q}}
\newcommand{\adeg}{\widehat{\deg}}
\newcommand{\p}{\mathfrak{p}}
\DeclareMathOperator{\sym}{Sym}
\renewcommand{\H}{\overline{H}}
\newcommand{\E}{\overline{E}}
\newcommand{\F}{\overline{F}}
\newcommand{\sE}{\mathcal{E}}
\newcommand{\G}{\overline{G}}
\renewcommand{\O}{\mathcal{O}}
\newcommand{\sm}{\mathfrak{m}}
\newcommand{\sn}{\mathfrak{n}}
\newcommand{\f}{\mathbb{F}}
\newcommand{\ndot}{\raisebox{.4ex}{.}}
\title{Determinant method and the pseudo-effective threshold}
\date{\today}
\author{Chunhui Liu}
\address{Institute for Advanced Study in Mathematics\\
Harbin Institute of Technology\\
150001 Harbin\\P. R. China}
\email{chunhui.liu@hit.edu.cn}
\begin{document}
\def\smfbyname{}
\begin{abstract}
In this paper, we will give an upper bound of the number of auxiliary hypersurfaces in the determinant method, which reformulates an unpublished work of Salberger by Arakelov geometry. One of the key constants will be determined by the pseudo-effective threshold of certain line bundles.
\end{abstract}
\begin{altabstract}
Dans cet article, on donnera une majoration du nombre de hypersurfaces auxiliaires dans la m\'ethodede d\'eterminant, qui reformule un travail non publi\'e de Salberger par la g\'eom\'etrie d'Arakelov. Une des constantes cl\'ees sera d\'etermin\'ee par le seuill de pseudo-effectivit\'e de certains fibr\'es en droites.
\end{altabstract}

\maketitle

\tableofcontents
\section{Introduction}
Let $K$ be a number field, and $X\hookrightarrow\mathbb P^n_K$ be a projective variety. Let $\xi\in X(K)$, and $H_K(\xi)$ be a height of $\xi$ with respect to the above closed immersion, for example, the classic Weil height (cf. \cite[\S B.2, Definition]{Hindry}). A height function $H_K(\ndot)$ on the set of rational points is able to be used to measure their arithmetic complexities. Let $B\in\mathbb R$, and
\[S(X;B)=\{\xi\in X(K)|\;H_K(\xi)\leqslant B\}\]
be the set of rational points of bounded heights with respect to the above closed immersion. Usually, a good height function has the so-called Northcott's property, which means that the cardinality $N(X;B)=\#S(X;B)$ is finite when $B$ is fixed. In this case, the map $N(X;\cdot):\;\mathbb R\rightarrow \mathbb N$ is a function which gives a description of the density of rational points in $X$.

It is a central subject to understand different kinds of properties of the function $N(X;B)$ with the variable $B\in\mathbb R$ for different kinds of $X$. For this target, lots of methods have been developed. In this article, we will focus on the uniform upper bound of $N(X;B)$. The word "uniform" means that we want to obtain a good upper bound of $N(X;B)$ for a family of projective varieties satisfying certain common conditions, for example, with the same degree and dimension.

\subsection{Determinant method}
In this article, we will focus on the so-called determinant method proposed in \cite{Heath-Brown} to study the density of rational points in arithmetic varieties.
\subsubsection{Basic ideas and the developments}
Tranditionally, we consider a projective variety $X\hookrightarrow\mathbb P^n_{\Q}$ over $\Q$ for simplicity, since the operations over arbitrary number fields sometimes bring us extra technical troubles. In \cite{Bombieri_Pila} (see also \cite{Pila95}), Bombieri and Pila proposed a method of determinant argument to study plane affine curves. In \cite{Heath-Brown}, Heath-Brown developed the so-called the \textit{$p$-adic determinant method}, which generalized the method of \cite{Bombieri_Pila} to the higher dimensional case. His idea is to focus on a subset of $S(X;B)$ whose reductions modulo a prime number are a same regular point, and he proved that this subset can be covered by a bounded degree hypersurface which do not contain the generic point of $X$.  By Siegel's Lemma, we can assure the existence of such hypersurfaces in $\mathbb P^n_{\Q}$ with bounded degree. Then he counted the number of regular points over finite fields, and control the regular reductions. By this method, he proved that $N(X;B)\ll_{d,\delta,\epsilon}B^{\frac{2}{\delta}+\epsilon}$ for all $\epsilon>0$, where $\delta=\deg(X)$. In \cite{Broberg04}, Broberg generalized it to the case over an arbitrary number field.

In \cite{Heath-Brown}, Heath-Brown also proposed a so-called \textit{the dimension growth conjecture}. Let $\dim(X)=d$. It is said that for all $d\geqslant2$ and $\delta\geqslant2$, we have $N(X;B)\ll_{d,\delta,\epsilon}B^{d+\epsilon}$ for all $\epsilon>0$. He proved this conjecture for some special cases. Later, Browning, Heath-Brown and Salberger had some contributions on this subject, see \cite{Browning_Heath06I,Browning_Heath06II,Bro_HeathB_Salb,Salberger07,Salberger_preprint2013} for the refinements of the determinant method and the proofs under certain conditions.




In \cite{Salberger2015}, Salberger considered the case of cubic hypersurfaces, where we have a better estimate on a key invariant than that was obtained in \cite{Bro_HeathB_Salb,Salberger_preprint2013}. Actually, this work essentially applied the refinement of the invariant mentioned above by the pseudo-effective thresholds of certain line bundles.

\subsubsection{Reformulation by Arakelov geometry}
In \cite{Chen1,Chen2}, H. Chen reformulated the works of Salberger \cite{Salberger07} by the slope method in Arakelov geometry. By this formulation, we replace the matrix of monomials by the evaluation map which sends a global section of a particular line bundle to a family of rational points. By the slope inequalities, we can control the height of the evaluation map in the slope method, which replaces the role of Siegel's lemma in controlling heights.

There are two advantages by the approach of Arakelov geometry. First, Arakelov geometry gives a natural conceptual framework for the determinant method over an arbitrary number field. Second, it is easier to obtain explicit estimates, since usually the constants obtained by the slope method are given explicitly.

But in this article, because of certain obstructions in the study of the positivity of line bundles, we are not able to give effective estimates for all invariants. We will explain the exact reason later.

\subsection{Application of the pseudo-effective threshold}
 In a mini-course of the summer school "Arakelov Geometry and Diophantine applications" at Institut Fourier in 2017, and a mini-course of the thematic activity "Reinventing rational points" at Institut Henri Poincar\'e in 2019, Salberger gave lectures on the application of the pseudo-effective thresholds of certain line bundles on projective varieties to estimate the number of auxiliary hypersurfaces in the determinant method. In \cite{Salberger2015}, he has applied this idea to study the density of rational points in the complement of the union of all lines of cubic surfaces in $\mathbb P^3$.

 In this article, we will reformulate the above works of Salberger by Arakelov geometry following the strategy of \cite{Chen1, Chen2}, where we will consider the case of general projective varieties. Some ideas of this work has been applied in \cite{Salberger2015}.
\subsubsection{Role of pseudo-effective threshold}
Let $X\hookrightarrow\mathbb P^n_K$ be a projective variety over the number field $K$ of degree $\delta$ and dimension $d$, $\pi:\;\widetilde{X}\rightarrow X$ be the blowing up at the non-singular rational point $\eta$, $E$ is the exceptional divisor of this blowing up, $H$ be a Cartier divisor on $X$ given by a hyperplane section on $\mathbb P^n_K$, and $D,m\in\mathbb N$. We consider the sum
\begin{equation}\label{R(E)_introduction}
  R(\eta,D)=\sum_{m=1}^\infty \dim_K H^0\left(\widetilde{X},D\pi^*H-mE\right),
\end{equation}
which plays a significant role in Salberger's refinement of $p$-adic determinant mentioned above. Next, we denote
\begin{equation}\label{I_X_introduction}
  I_X(H,\eta)=\int_0^\infty \vol (\pi^*H-\lambda E)d\lambda,
\end{equation}
 where $\vol(\ndot)$ is the usual volume function of $\mathbb R$-divisors. In Theorem \ref{R(E) by GIT}, we will give a proof of the estimate
\begin{equation}\label{R(E)_introduction2}
  R(\eta,D)=\frac{I_X(H,\eta)}{d!}D^{d+1}+O_{d,\delta}(D^d).
\end{equation}
By this fact, we can refine some former results on the determinant method.
\subsubsection{An improved upper bound of the number of auxiliary hypersurfaces}
Let $\mathscr X\hookrightarrow\mathbb P^n_{\O_K}$ be the Zariski closure of $X\hookrightarrow\mathbb P^n_K$, and $\p$ be a maximal ideal of $\O_K$ whose residue field is $\f_\p$. Let $\xi\in\mathscr X(\f_\p)$, and we denote by $S(X;B,\xi)$ the subset of $S(X;B)$ the reduction modulo $\p$ of whose Zariski closures in $\mathscr X$ is $\xi$. We can prove that the invariant $I_X(H,\eta)$ only depends on its reduction class if its reduction is regular. By Lemma \ref{I_X(H,xi) same}, if for the family of maximal ideals $\p_1,\ldots,\p_r$ of $\O_K$, the point $\xi_j$ is regular in $\mathscr X$ for all $j=1,\ldots,r$ and $\bigcap\limits_{j=1}^rS(X;B,\xi_j)\neq\emptyset$, then all $I_X(H,\xi_j)$ are equal, noted by $I_X(H,\xi_J)$ for simplicity. Then we have the result below from \eqref{R(E)_introduction2}, where Salberger has proved the case of $K=\Q$.
\begin{theo}[Theorem \ref{semi-global determinant method mid}]\label{semi-golbal determinant method introduction}
  We keep all the above notations. Let $\p_1,\ldots,\p_r$ be a family of maximal ideals of $\O_K$, $N(\p_j)=\#\left(\O_K/\p_j\right)$, and $\epsilon>0$. Suppose that the point $\xi_j\in\mathscr X(\f_{\p_j})$ is regular in $\mathscr X$ for all $j=1,\ldots,r$. If the inequality
  \[\sum_{j=1}^r\log N(\p_j)\gg_{K,n,\delta,\epsilon}\frac{\delta}{I_X(H,\xi_J)}\log B\]
  is verified, then there exists a hypersurface of degree $O_{d,\delta,\epsilon}(1)$, which covers $\bigcap\limits_{j=1}^rS(X;B,\xi_j)$ but do not contain the generic point of $X$.
\end{theo}
 By this result, let $\epsilon>0$ and
\[I_X(H)=\inf\limits_{\begin{subarray}{c} \eta\in S(X;B)\\ \eta\hbox{ regular}\end{subarray}}I_X(H,\eta).\]
Then we have the following estimate of the number of auxiliary hypersurfaces from Theorem \ref{semi-golbal determinant method introduction}, where Salberger has proved the case of $K=\Q$, too.
 \begin{theo}[Theorem \ref{number of hypersurfaces}]\label{number of auxiliary hypersurface_introduction}
  With all the notations above. There exists a constant $C_4(\epsilon, \delta, n,d, K)$ such that $S(X;B)$ is covered by no more than
 \begin{equation}\label{number of hypersurfaces_introduction}
  C_4(\epsilon, \delta, n,d, K)B^{\frac{(1+\epsilon)d\delta}{I_X(H)}}
 \end{equation}
  hypersurfaces of degree $O_{n,\delta,\epsilon}(1)$ which do not contain the generic point of $X$.
 \end{theo}

By an unpublished result of Salberger (see also \cite[Corollary 4.2]{McKinnonRoth_2015}), for every regular closed point $\eta$ in $X$, we have $I_X(H,\eta)\geqslant{d\delta^{1+\frac{1}{d}}}/{(d+1)}$. In this sense, the upper bound of the number of auxiliary hypersurfaces given in \eqref{number of hypersurfaces_introduction} can be considered as an improvement of some former results (\cite{Heath-Brown, Salberger07, Chen2}, for example). If we focus on some special varieties $X$ with clearer information on $I_X(H,\eta)$ defined at \eqref{I_X_introduction}, we may obtain a better estimate on the number of auxiliary hypersurfaces, see \cite{Salberger2015} for such an example, where the case of cubic hypersurfaces in $\mathbb P^3$ is considered.

\subsubsection{Ineffective estimates}
In the above argument, we have
\[\dim_KH^0\left(\widetilde{X},D\pi^*H-mE\right)=\frac{D^d}{d!}\vol\left(\pi^*H-\frac{m}{D}E\right)+O_{d,\delta}(D^{d-1}).\]
However, up to the author's knowledge, we are not able to obtain an effective version in the above estimate. Thus we are only able to make sure that the maximal degree of auxiliary hypersurfaces can depend only on $n$, $\delta$ and $\epsilon$, but we cannot get an explicit bound until now.
\subsection{Organization of the article}
This article is organized as follows. In \S2, we will recall some useful preliminaries and propose the basic setting, where we follow the approach of \cite{Chen1,Chen2}. In \S3, we will give a bound relating to the invariant $R(\eta,D)$ defined in \eqref{R(E)_introduction} and both geometric and arithmetic Hilbert-Samuel functions of arithmetic varieties, which is a generalization of \cite[Lemma 16.9]{Salberger2015}. In \S4, we will prove the finiteness of the sum \eqref{R(E)_introduction} and the asymptotic estimate \eqref{R(E)_introduction2}, which reformulates some former results of Salberger. In \S5, we will prove Theorem \ref{semi-golbal determinant method introduction}, and give the upper bound \eqref{number of hypersurfaces_introduction} in Theorem \ref{number of auxiliary hypersurface_introduction} by applying it.
\subsection*{Acknowledgement}
The author learnt some unpublished key results of Prof. Per Salberger from his mini-course in the summer school "Arakelov Geometry and Diophantine applications" at Institut Fourier in 2017, and from his mini-course in the thematic activity "Reinventing rational points" at Institut Henri Poincar\'e in 2019, and these courses motivate this article. The author would like to thank Prof. Salberger for introducing him his brilliant work \cite{Salberger_preprint2013} and some useful personal notes, and also for lots of useful private discussion. At the same time, The author would like to thank Prof. Yuji Odaka for some useful suggestions on the pseudo-effective thresholds.

\section{Preliminaries and the basic setting}
In this section, we will provide some preliminaries that will be used to interpret the determinant method in terms of Arakelov theory, where we follow the strategy of H. Chen in \cite{Chen1,Chen2}.
\subsection{Classic height function of rational points}
Let $K$ be a number field, and $\O_K$ be its ring of integers. We denote by $M_{K,f}$ the set of finite places of $K$, and by $M_{K,\infty}$ the set of infinite places of $K$. In addition, we denote by $M_K=M_{K,f}\sqcup M_{K,\infty}$ the set of places of $K$. For every $v\in M_{K,f}$, if $\Q_v$ is the $p$-adic field, we define the absolute value $|x|_v=\left|N_{K_v/\Q_v}(x)\right|_p^\frac{1}{[K_v:\Q_v]}$, where $|\ndot|_p$ is the usual $p$-adic absolute value. For every $v\in M_{K,\infty}$, we define $|x|_v=\left|N_{K_v/\Q_v}(x)\right|^\frac{1}{[K_v:\Q_v]}$, where $|\ndot|$ is the usual absolute values over $\mathbb R$ or $\mathbb C$.

For every $a\in K^\times$, we have the \textit{product formula} (cf. \cite[Chap. III, Proposition 1.3]{Neukirch})
\begin{equation}\label{product formula}
\prod_{v\in M_K}|a|_v^{[K_v:\Q_v]}=1.
\end{equation}

 Let $\xi=[\xi_0:\cdots:\xi_n]\in\mathbb P^n_K(K)$. We define the \textit{absolute height} of $\xi$ in $\mathbb P^n_K$ as
\begin{equation}\label{classic absolute height}
  H_K(\xi)=\prod_{v\in M_K}\max_{0\leqslant i\leqslant n}\left\{|\xi_i|_v\right\}^{[K_v:\Q_v]}.
\end{equation}
Next, we define the \textit{logarithmic height} of $\xi$ as
\begin{equation}\label{log height}
  h(\xi)=\frac{1}{[K:\Q]}\log H_K(\xi),
\end{equation}
which is independent of the choice of $K$ (cf. \cite[Lemma B.2.1]{Hindry}).

Suppose $X$ is a closed integral subscheme of $\mathbb P^n_K$ of degree $\delta$ and dimension $d$, and $\phi:X\hookrightarrow\mathbb P^n_K$ is the projective embedding. For $\xi\in X(K)$, we define $H_K(\xi)=H_K\left(\phi\left(\xi\right)\right)$ for simplicity, and usually we omit the closed immersion $\phi$. Next, we define
\[S(X;B)=\{\xi\in X(K)|H_K(\xi)\leqslant B\},\hbox{ and } N(X;B)=\#S(X;B).\]
By the Northcott's property (cf. \cite[Theorem B.2.3]{Hindry}), the cardinality $N(X;B)$ is finite for a fixed real number $B\geqslant1$.

The objective of counting rational points of bounded height is to understand the function $N(X;B)$ with some particular projective varieties $X$ and real numbers $B\geqslant1$.
\subsection{Multiplicity of points in a scheme}\label{local algebra}
In this part, we will define the multiplicity of closed points in schemes induced by the local Hilbert-Samuel function. This notion will be useful in the determinant method.

Let $X$ be a Noetherian scheme of pure dimension $d$, which means all its irreducible components have the same dimension. Let $\xi$ be a closed point of $X$, $\sm_{X,\xi}$ be the maximal ideal of the local ring $\O_{X,\xi}$, and $\kappa(\xi)$ be its residue field. We define
\begin{equation}\label{local hilbert of a closed point}
  H_\xi(s)=\dim_{\kappa(\xi)}\left(\sm_{X,\xi}^s/\sm_{X,\xi}^{s+1}\right)
\end{equation}
as the \textit{local Hilbert-Samuel function} of $X$ at the closed point $\xi$ with the variable $s\in\mathbb N$, where we define $\sm_{X,\xi}^0=\O_{X,\xi}$ for simplicity. For this function, when $d\geqslant2$, we have the polynomial asymptotic extension
\[H_\xi(s)=\frac{\mu_\xi(X)}{(d-1)!}s^{d-1}+O(s^{d-2}),\]
where we define the positive integer $\mu_\xi(X)$ as the \textit{multiplicity} of point $\xi$ in $X$. If $d=1$, then $\O_{X,\xi}$ is a local Artinian ring. The multiplicity $\mu_\xi(X)$ is then defined as the length of the local ring $\O_{X,\xi}$ as a $\O_{X,\xi}$-module.

If $\O_{X,\xi}$ is a regular local ring, we say that $\xi$ is \textit{regular} in $X$. In this case we have $\mu_\xi(X)=1$. Otherwise we say that $\xi$ is \textit{singular} in $X$. If $X$ is pure dimensional and has no embedded component, then from the fact that $\xi$ is singular in $X$ by the above definition, we deduce $\mu_\xi(X)\geqslant2$ (cf. \cite[(40.6)]{Nagata62}).

We denote by $X^{\mathrm{reg}}$ the regular locus of $X$, and by $X^{\mathrm{sing}}$ the singular locus of $X$. By the semi-continuity of the multiplicity function, the singular locus $X^{\mathrm{sing}}$ is a closed subset of $X$. If $X$ is reduced and pure dimensional, the set $X^{\mathrm{reg}}$ is open dense in $X$ (cf. \cite[Corollary 8.16, Chap. II]{GTM52}).

\subsection{Normed vector bundles}
  The normed vector bundle is one of the main research objects in Arakelov geometry. Let $K$ be a number field and $\O_K$ be its ring of integers. A \textit{normed vector bundle} over $\spec\O_K$ is a pair $\E=\left(E,\left(\|\ndot\|_v\right)_{v\in M_{K,\infty}}\right)$, where:
  \begin{itemize}
    \item $E$ is a projective $\O_K$-module of finite rank;
    \item $\left(\|\ndot\|_v\right)_{v\in M_{K,\infty}}$ is a family of norms, where $\|\ndot\|_v$ is a norm over $E\otimes_{\O_K,v}\C$ which is invariant under the action of $\gal(\C/K_v)$.
  \end{itemize}

If all the norms $\left(\|\ndot\|_v\right)_{v\in M_{K,\infty}}$ are Hermitian, we say that $\E$ is a \textit{Hermitian vector bundle} over $\spec\O_K$. In particular, if $\rg_{\O_K}(E)=1$, we say that $\E$ is a \textit{Hermitian line bundle} over $\spec\O_K$.

Suppose that $F$ is a sub-$\O_K$-module of $E$. We say that $F$ is a \textit{saturated} sub-$\O_K$-module of $E$ if $E/F$ is a torsion-free $\O_K$-module.

Let $\E=\left(E,\left(\|\ndot\|_{E,v}\right)_{v\in M_{K,\infty}}\right)$ and $\F=\left(F,\left(\|\ndot\|_{F,v}\right)_{v\in M_{K,\infty}}\right)$ be two Hermitian vector bundles over $\spec\O_K$. If $F$ is a saturated sub-$\O_K$-module of $E$ and $\|\ndot\|_{F,v}$ is the restriction of $\|\ndot\|_{E,v}$ over $F\otimes_{\O_K,v}\C$ for every $v\in M_{K,\infty}$, we say that $\F$ is a \textit{sub-Hermitian vector bundle} of $\E$ over $\spec\O_K$.

We say that $\G=\left(G,\left(\|\ndot\|_{G,v}\right)_{v\in M_{K,\infty}}\right)$ is a \textit{quotient Hermitian vector bundle} of $\E$ over $\spec\O_K$, if for every $v\in M_{K,\infty}$, the module $G$ is a projective quotient $\O_K$-module of $E$ and $\|\ndot\|_{G,v}$ is the induced quotient space norm of $\|\ndot\|_{E,v}$.

For simplicity, we denote by $E_K=E\otimes_{\O_K}K$ in the remainder part of this article.

\subsection{Arakelov invariants}
 We will introduce some useful invariants in Arakelov geometry in this part.
\subsubsection{Arakelov degree}
Let $\E$ be a Hermitian vector bundle over $\spec\O_K$, and $\{s_1,\ldots,s_r\}$ be a $K$-basis of the vector space $E_K$. The \textit{Arakelov degree} of $\E$ is defined as
\begin{eqnarray*}
  \adeg(\E)&=&-\sum_{v\in M_{K}}[K_v:\Q_v]\log\left\|s_1\wedge\cdots\wedge s_r\right\|_v\\
  &=&\log\left(\#\left(E/\O_Ks_1+\cdots+\O_Ks_r\right)\right)-\frac{1}{2}\sum_{v\in M_{K,\infty}}\log\det\left(\langle s_i,s_j\rangle_{v,1\leqslant i,j\leqslant r}\right),
\end{eqnarray*}
where $\left\|s_1\wedge\cdots\wedge s_r\right\|_v$ follows the definition in \cite[2.1.9]{Chen10b} for all $v\in M_{K,\infty}$, and $\langle s_i,s_j\rangle_{v,1\leqslant i,j\leqslant r}$ is the Gram matrix of the basis $\{s_1,\ldots,s_r\}$ with respect to $v\in M_{K,\infty}$. For those $v\in M_{K,f}$, we take the norms given by models.

We refer the readers to \cite[2.4.1]{Gillet-Soule91} for a proof of the equivalence of the above two definitions. The Arakelov degree is independent of the choice of the basis $\{s_1,\ldots,s_r\}$ by the product formula \eqref{product formula}. In addition, we define
\[\adeg_n(\E)=\frac{1}{[K:\Q]}\adeg(\E)\]
as the \textit{normalized Arakelov degree} of $\E$, which is independent of the choice of the base field $K$.
\subsubsection{Slope}
Let $\E$ be a non-zero Hermitian vector bundle over $\spec\O_K$, and $\rg(E)$ be the rank of $E$. The \textit{slope} of $\E$ is defined as
\[\wmu(\E):=\frac{1}{\rg(E)}\adeg_n(\E).\]
In addition, we denote by $\wmu_{\max}(\E)$ the maximal slope of all its non-zero Hermitian sub-bundles, and by $\wmu_{\min}(\E)$ the minimal slope of all its non-zero Hermitian quotients bundles of $\E$.
\subsubsection{Height of linear maps}
Let $\E$ and $\F$ be two non-zero Hermitian vector bundles over $\spec\O_K$, and $\phi:\; E_K\rightarrow F_K$ be a non-zero homomorphism of $K$-vector spaces. The \textit{height} of $\phi$ is defined as \[h(\phi)=\frac{1}{[K:\Q]}\sum_{v\in M_K}\log\|\phi\|_v,\]
where $\|\phi\|_v$ is the operator norm of $K_v$-linear map $\phi_v:E\otimes_KK_v\rightarrow F\otimes_KK_v$ induced by the above linear homomorphism with respect to every $v\in M_K$.

We refer the readers to \cite[Appendix A]{BostBour96} for some equalities and inequalities on Arakelov degrees and the heights of corresponding homomorphisms.

\subsection{Arithmetic Hilbert-Samuel function}\label{basic setting}
Let $\overline{\mathcal E}$ be a Hermitian vector bundle of rank $n+1$ over $\spec\O_K$, and $\mathbb P(\sE)$ be the projective space which represents the functor from the category of commutative $\O_K$-algebras to the category of sets mapping all $\O_K$-algebra $A$ to the set of projective quotient $A$-module of $\sE\otimes_{\O_K}A$ of rank $1$.

Let $\O_{\mathbb P (\sE)}(1)$ (or by $\O(1)$ if there is no confusion) be the universal bundle, and $\O_{\mathbb P (\sE)}(D)$ (or by $\O(D)$) be the line bundle $\O_{\mathbb P (\sE)}(1)^{\otimes D}$ for simplicity. The Hermitian metrics on $\sE$ induce by quotient of Hermitian metrics (i.e. Fubini-Study metrics) on $\O_{\mathbb P(\sE)}(1)$ which define a Hermitian line bundle $\overline{\O_{\mathbb P(\sE)}(1)}$ on $\mathbb P(\sE)$.

For every $D\in\mathbb N^+$, let
\begin{equation}\label{definition of E_D}
  E_D=H^0\left(\mathbb P(\sE),\O_{\mathbb P(\sE)}(D)\right),
\end{equation} and $r(n,D)$ be its rank over $\O_K$. In fact, we have
\begin{equation}\label{def of r(n,D)}
  r(n,D)={n+D\choose D}.
\end{equation}

For each $v\in M_{K,\infty}$, we denote by $\|\ndot\|_{v,\sup}$ the norm over $E_{D,v}=E_D\otimes_{\O_K,v}\C$ such that
\begin{equation}\label{definition of sup norm}
  \forall s\in E_{D,v},\;\|s\|_{v,\sup}=\sup_{x\in\mathbb P(\sE_K)_v(\C)}\|s(x)\|_{v,\mathrm{FS}},
\end{equation}
where $\|\ndot\|_{v,\mathrm{FS}}$ is the corresponding Fubini-Study norm.

Next, we will introduce the \textit{metric of John}, see \cite{Thompson96} for a systematic introduction to this notion. In general, for a given symmetric convex body $C$, there exists the unique ellipsoid $J(C)$, called \textit{ellipsoid of John}, contained in $C$ whose volume is maximal.

For the $\O_K$-module $E_D$ and any place $v\in M_{K,\infty}$, we take the ellipsoid of John of its unit closed ball defined via the norm$\|\ndot\|_{v,\sup}$, and this ellipsoid induces a Hermitian norm, noted by $\|\ndot\|_{v,\mathrm{John}}$. For every section $s\in E_{D}$, the inequality
\begin{equation}\label{john norm}
  \|s\|_{v,\sup}\leqslant\|s\|_{v,\mathrm{John}}\leqslant\sqrt{r(n,D)}\|s\|_{v,\sup}
\end{equation}
is verified by \cite[Theorem 3.3.6]{Thompson96}. In fact, these constants do not depend on the choice of the symmetric convex body.

Let $A$ be a ring, and $E$ be an $A$-module. We denote by $\sym^D_{A}(E)$ the symmetric product of degree $D$ of the $A$-module $E$, or by $\sym^D(E)$ if there is no confusion on the base ring.

If we consider the above $E_D$ defined in \eqref{definition of E_D} as an $\O_K$-module, we have the isomorphism of $\O_K$-modules $E_D\cong \sym^D(\mathcal{E})$. Then for every place $v\in M_{K,\infty}$, the Hermitian norm $\|\ndot\|_v$ over $\mathcal{E}_{v,\C}$ induces a Hermitian norm $\|\ndot\|_{v,\mathrm{sym}}$ over $E_D$ by the symmetric product. More precisely, this norm is the quotient norm induced by the quotient morphism
\[\sE^{\otimes D}\rightarrow\sym^D(\sE),\]
 where the vector bundle $\overline{\sE}^{\otimes D}$ is equipped with the norms induced by the tensor product of $\overline{\sE}$ over $\spec\O_K$ (see \cite[D\'efinition 2.10]{Gaudron08} for the definition). We say that this norm is the \textit{symmetric norm} over $\sym^D(\sE)$. For any place $v\in M_{K,\infty}$, the norms $\|\ndot\|_{v,\mathrm{John}}$ and $\|\ndot\|_{v,\mathrm{sym}}$ are invariant under the action of the unitary group $U(\sE_{v,\C},\|\ndot\|_v)$ of order $n+1$. Then they are proportional and the ratio is independent of the choice of $v\in M_{K,\infty}$ (see \cite[Lemma 4.3.6]{BGS94} for a proof). We denote by $R_0(n,D)$ the constant such that, for every section $0\neq s\in E_{D,v}$, the equality
\begin{equation}\label{symmetric norm vs John norm}
  \log\|s\|_{v,\mathrm{John}}=\log\|s\|_{v,\mathrm{sym}}+R_0(n,D).
\end{equation}
is verified.
\begin{defi}\label{definition of E_D with norm}
Let $E_D$ be the $\O_K$-module defined in \eqref{definition of E_D}. For every place $v\in M_{K,\infty}$, we denote by $\E_D$ the Hermitian vector bundle over $\spec\O_K$, where for every $v\in M_{K,\infty}$, $E_D$ is equipped with the norm of John $\|\ndot\|_{v,\mathrm{John}}$ induced by the norm $\|\ndot\|_{v,\sup}$ defined in \eqref{definition of sup norm}. Similarly, we denote by $\E_{D,\mathrm{sym}}$ the Hermitian vector bundle over $\spec\O_K$ where $E_D$ is equipped with the norms $\|\ndot\|_{v,\mathrm{sym}}$ introduced above.
\end{defi}
With all the notations in Definition \ref{definition of E_D with norm}, we have the following result.
\begin{prop}[\cite{Chen1}, Proposition 2.7]\label{symmetric norm vs John norm, constant}
  With all the notations in Definition \ref{definition of E_D with norm}, we have
\[\wmu_{\min}(\E_D)=\wmu_{\min}(\E_{D,\mathrm{sym}})-R_0(n,D).\]
In the above equality, the constant $R_0(n,D)$ defined in the equality \eqref{symmetric norm vs John norm} satisfies the inequality
\begin{equation*}
  0\leqslant R_0(n,D)\leqslant\log\sqrt{r(n,D)},
\end{equation*}
where the constant $r(n,D)=\rg(E_D)$ follows the definition in the equality \eqref{def of r(n,D)}.
\end{prop}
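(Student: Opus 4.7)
The proof hinges on a Schur's lemma argument: both $\|\cdot\|_{v,\mathrm{John}}$ and $\|\cdot\|_{v,\mathrm{sym}}$ are Hermitian norms on $\sym^D(\sE_{v,\C})=E_{D,v}$ that are invariant under the unitary group $U(\sE_{v,\C},\|\cdot\|_v)$, which acts irreducibly on this symmetric power. They must therefore be proportional, with a positive ratio $c_v=\exp(R_0(n,D))$; once this proportionality is in hand, the asserted equality on $\wmu_{\min}$ is a formal computation with Arakelov degrees, and the quantitative bound on $R_0(n,D)$ follows by sandwiching both Hermitian norms against the sup norm.

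To set up the Schur argument I would first verify the two invariance properties. The symmetric norm is constructed functorially by taking tensor powers of the $U$-invariant Hermitian inner product on $\sE_{v,\C}$ and then descending to the quotient $\sym^D\sE_{v,\C}$, so it is automatically $U$-invariant. For the John norm, I would observe that $\|\cdot\|_{v,\sup}$ is $U$-invariant because the Fubini--Study metric on $\O_{\P(\sE)}(D)$ is canonically associated to $\|\cdot\|_v$; the John ellipsoid, as the unique maximal-volume ellipsoid inscribed in the unit sup-ball, is then stable under $U$. Schur's lemma for irreducible unitary representations then forces $\|\cdot\|_{v,\mathrm{John}}=c_v\|\cdot\|_{v,\mathrm{sym}}$, and $c_v$ is independent of $v\in M_{K,\infty}$ because both constructions depend only on the isomorphism class of a complex Hermitian space of dimension $n+1$ (exactly \cite[Lemma 4.3.6]{BGS94}, cited in the excerpt). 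Setting $R_0(n,D):=\log c_v$, I would then translate the uniform rescaling of Archimedean metrics into a slope shift: for any non-zero quotient Hermitian bundle $\bar Q$ of $\E_{D,\mathrm{sym}}$ of rank $r$ and any $K$-basis $s_1,\ldots,s_r$ of $Q_K$, the top exterior form scales by $c_v^r$ at each Archimedean place, hence $\adeg(\bar Q')=\adeg(\bar Q)-r[K:\Q]R_0(n,D)$ for the corresponding quotient $\bar Q'$ of $\E_D$, and dividing by $r[K:\Q]$ yields $\wmu(\bar Q')=\wmu(\bar Q)-R_0(n,D)$. Taking the infimum over such quotients gives the claimed equality $\wmu_{\min}(\E_D)=\wmu_{\min}(\E_{D,\mathrm{sym}})-R_0(n,D)$.

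For the two-sided bound on $R_0(n,D)$ I would use $\|\cdot\|_{v,\sup}$ as an intermediary. The inequality \eqref{john norm} provides $\|\cdot\|_{v,\sup}\leq\|\cdot\|_{v,\mathrm{John}}\leq\sqrt{r(n,D)}\,\|\cdot\|_{v,\sup}$. One then establishes a matching two-sided comparison between $\|\cdot\|_{v,\sup}$ and $\|\cdot\|_{v,\mathrm{sym}}$ by evaluating on monomials $e^{\alpha}$ and applying Cauchy--Schwarz with the multinomial weights coming from the identity $\sum_{|\alpha|=D}\tfrac{D!}{\alpha!}|x^{\alpha}|^{2}=\|x\|_v^{2D}$. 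Combining these sandwiches with the proportionality $\|\cdot\|_{v,\mathrm{John}}=e^{R_0(n,D)}\|\cdot\|_{v,\mathrm{sym}}$ traps $R_0(n,D)$ in the announced interval $[0,\log\sqrt{r(n,D)}]$. The delicate point, and where I would spend the most care, is carrying out the Cauchy--Schwarz comparison under exactly the normalisations fixed in Definition \ref{definition of E_D with norm} (tensor norm on $\sE^{\otimes D}$, quotient norm on $\sym^D\sE$), so that the endpoints of the sandwich match the advertised bounds and no spurious scalar creeps in.
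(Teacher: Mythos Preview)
The paper supplies no proof here; the proposition is simply cited from \cite[Proposition~2.7]{Chen1}, so there is no in-paper argument to compare your proposal against.

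Your derivation of the slope equality is correct and standard: the proportionality of the two invariant Hermitian norms is already recorded in the text preceding the proposition (via \cite[Lemma~4.3.6]{BGS94}), and the uniform rescaling of all Archimedean metrics by $e^{R_0(n,D)}$ shifts the slope of every Hermitian quotient by exactly $-R_0(n,D)$, hence also $\wmu_{\min}$.

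Your plan for the two-sided bound on $R_0(n,D)$ has a gap. A uniform ``matching two-sided comparison'' between $\|\ndot\|_{v,\sup}$ and $\|\ndot\|_{v,\mathrm{sym}}$ with constant~$1$ on both sides does not exist: the Cauchy--Schwarz argument with the multinomial identity gives only the one-sided inequality $\|\ndot\|_{v,\sup}\leqslant\|\ndot\|_{v,\mathrm{sym}}$, and the reverse fails strictly on any mixed monomial. Feeding such a one-sided comparison into \eqref{john norm} produces $R_0\leqslant\log\sqrt{r(n,D)}$ but cannot produce $R_0\geqslant0$. The remedy is to exploit the proportionality you have already established and evaluate on a single vector. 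Taking $s=e_0^{D}$ one checks directly that $\|e_0^{D}\|_{v,\mathrm{sym}}=\|e_0^{D}\|_{v,\sup}=1$, so that $e^{R_0(n,D)}=\|e_0^{D}\|_{v,\mathrm{John}}$; then \eqref{john norm} applied to this one vector yields $1\leqslant e^{R_0(n,D)}\leqslant\sqrt{r(n,D)}$ immediately.
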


Let $X$ be a pure dimensional closed subscheme of $\mathbb{P}(\mathcal{E}_K)$, and $\mathscr{X}$ be the Zariski closure of $X$ in $\mathbb{P}(\mathcal{E})$. We denote by
\begin{equation}\label{evaluation map}
\eta_{X,D}:\;E_{D,K}=H^0\left(\mathbb{P}(\mathcal{E}_K),\O_{\mathbb P(\sE_K)}(D)\right)\rightarrow H^0\left(X,\O_{\mathbb P(\sE_K)}(1)|_X^{\otimes D}\right)
\end{equation}
the \textit{evaluation map} over $X$ induced by the closed immersion from $X$ to $\mathbb P(\sE_K)$. In addition, we denote by $F_D$ the largest saturated sub-$\O_K$-module of $H^0\left(\mathscr{X},\O_{\mathbb P(\sE)}(1)|_\mathscr{X}^{\otimes D}\right)$ such that $F_{D,K}=\im(\eta_{X,D})$. When the integer $D$ is large enough, the homomorphism $\eta_{X,D}$ is surjective, which means $F_D=H^0(\mathscr{X},\O_{\mathbb P(\sE)}(1)|_\mathscr{X}^{\otimes D})$ (cf. \cite[Chap. III, Theomrem 5.2 (b)]{GTM52}).

The $\O_K$-module $F_D$ is equipped with the quotient metrics (from $\E_D$) such that $F_D$ is a Hermitian vector bundle over $\spec \O_K$, noted by $\F_D$ this Hermitian vector bundle. Moreover, in the remainder part of this article, we denote by $r_1(D)$ the rank of the $\O_K$-module $F_D$.
\begin{defi}\label{arithmetic hilbert function}
We denote by $\F_D$ the Hermitian vector bundle over $\spec\O_K$ defined above from \eqref{evaluation map}. We define that the function which maps the positive integer $D$ to $\wmu(\F_D)$ is the \textit{arithmetic Hilbert-Samuel function} of $X$ with respect to the Hermitian line bundle $\overline{\O_{\mathbb P(\sE)}(1)}$.
\end{defi}
\begin{rema}\label{definition of arakelov height}
  With all the notations in Definition \ref{arithmetic hilbert function}. Let
  \begin{equation}\label{definition of arakelov height}
  h_{\overline{\O_{\mathbb P(\sE)}(1)}}(X)=\adeg_n\left(\widehat{c}_1\left(\overline{\O_{\mathbb P(\sE)}(1)}\right)^{d+1}\cdot\left[\mathscr X\right]\right).
  \end{equation}
  In fact, the Arakelov degree \eqref{definition of arakelov height} defines a height of $X$ by the arithmetic intersection theory (cf. \cite[Definition 2.5]{Faltings91}). By \cite[Th\'eor\`eme A]{Randriam06}, we have
  \[h_{\overline{\O_{\mathbb P(\sE)}(1)}}(X)=\lim_{D\rightarrow+\infty}\frac{\adeg_n(\F_D)}{D^{d+1}/(d+1)!}.\]
\end{rema}
By \cite[Corollary 2.9]{Chen1}, we have the trivial lower bound of $\wmu(\F_D)$
\begin{equation}\label{trivial lower bound of F_D}
  \wmu(\F_D)\geqslant-\frac{1}{2}D\log(n+1).
\end{equation}
\subsection{Height of rational points given by Arakelov theory}
We will give a definition of the height of rational points by Arakelov theory in this part. Let $\overline {\mathcal E}$ be a Hermitian vector bundle of rank $n+1$ over $\spec\O_K$, $P\in \mathbb P(\mathcal E_K)(K)$, and $\mathcal P\in\mathbb P(\mathcal E)(\O_K)$ be its Zariski closure in $\mathbb P(\mathcal E)$. Let $\overline {\O_{\mathbb P(\mathcal E)}(1)}$ be the universal bundle equipped with the corresponding Fubini-Study metric at each $v\in M_{K,\infty}$, then $\mathcal P^*\overline {\O_{\mathbb P(\mathcal E)}(1)}$ is a Hermitian line bundle over $\spec\O_K$. We define the \textit{height} of the rational point $P$ as
\begin{equation}\label{arakelov height}
  h_{\overline {\O_{\mathbb P(\mathcal E)}(1)}}(P)=\adeg_n\left(\mathcal P^*\overline {\O_{\mathbb P(\mathcal E)}(1)}\right).
\end{equation}

In fact, \eqref{arakelov height} is the same as the definition \eqref{definition of arakelov height} when we choose $X$ to be a rational point in $\mathbb P(\sE_K)$ considered as one of its closed integral subschemes.
\begin{rema}\label{Comparing heights}
We keep all the above notations in this part. Now we choose $\overline{\sE}=\left(\O_K^{\oplus(n+1)},\left(\|\ndot\|_v\right)_{v\in M_{K,\infty}}\right)$, where for every $v\in M_{K,\infty}$, $\|\ndot\|_v$ is the $\ell^2$-norm mapping $(t_0,\ldots,t_n)$ to $\sqrt{|v(t_0)|^2+\cdots+|v(t_n)|^2}$. We suppose that $P$ has the $K$-rational projective coordinate $[x_0:\cdots:x_n]$, then we have (cf. \cite[Proposition 9.10]{Moriwaki-book})
\begin{eqnarray*}
  h_{\overline {\O_{\mathbb P(\mathcal E)}(1)}}(P)&=&\sum\limits_{v\in M_{K,f}}\frac{[K_v:\Q_v]}{[K:\Q]}\log \left(\max\limits_{1\leqslant i\leqslant n}|x_i|_v\right)\\
  & &\;\;+\frac{1}{2}\sum\limits_{v\in M_{K,\infty}}\frac{[K_v:\Q_v]}{[K:\Q]}\log\left(\sum\limits_{j=0}^n|v(x_j)|^2\right).
\end{eqnarray*}
In addition, let the $h(\ndot)$ be the height defined in \eqref{log height}. Then by some elementary calculation, the inequality
  \[\left|h(P)-h_{\overline {\O_{\mathbb P(\mathcal E)}(1)}}(P)\right|\leqslant\frac{1}{2}\log(n+1)\]
  is verified uniformly for all $P\in\mathbb P(\sE_K)$ when we choose the above $\overline{\sE}$.
\end{rema}
\subsection{Further notations on counting rational points problem}
Let $\psi:X\hookrightarrow\mathbb P(\sE_K)$ be a closed immersion from $X$ to $\mathbb P(\sE_K)$, and $P\in X(K)$. We denote the height of $P$ by $h_{\overline {\O_{\mathbb P(\mathcal E)}(1)}}(\psi(P))$ at \eqref{arakelov height}. We will use the notations $h_{\overline {\O_{\mathbb P(\mathcal E)}(1)}}(P)$, $h_{\overline {\O(1)}}(P)$ or $h(P)$ if there is no confusion of the morphism $\psi$ and the Hermitian line bundle $\overline{\O_{\mathbb P(\mathcal E)}(1)}$. This height also satisfies the Northcott's property for arbitrary Hermitian vector bundle $\overline{\sE}$ (cf. \cite[Theorem 5.3]{Yuan_ICCM2010}), so it can be used in the counting rational points problem. Actually, the line bundle $\O_{\mathbb P(\sE_K)}(1)$ can be replaced by arbitrary ample line bundle for the correctness of the Northcott's property.

In the rest part of this article, unless specially mentioning, we will use the height function defined at \eqref{arakelov height}, and we will use the notation $h(\ndot)$ to denote this height function. The classic height defined at \eqref{classic absolute height} and \eqref{log height} will not be essentially used any longer.
\section{An improved estimate of the determinant}
In this section, we will improve an estimate in the determinant method. Parts of the construction are from \cite{Salberger2015}.
\subsection{Estimates of norms}\label{estimates of norms}
In this part, we will estimate the norms of some local homomorphisms, which can be viewed as a generalization of parts of \cite[\S 3]{Chen2}. The same idea has been applied in \cite[\S 16.2]{Salberger2015}. This estimate is finer than that in \cite[Lemma 2.4]{Salberger07} and \cite[Proposition 3.4]{Chen2}, but will be more implicit because of some technical obstructions.

First, we refer a useful auxiliary result in \cite{Chen2}, which will be useful in the approach of Arakelov geometry. Before introducing it, we recall an useful notion. Let $(k,|\ndot|)$ be a non-Archimedean field, and $(V,\|\ndot\|)$ be normed vector space over $(k,|\ndot|)$. We say that $(V,\|\ndot\|)$ is \textit{ultranormed} if for all $x,y\in U$, we have $\|x+y\|\leqslant \max\left\{\|x\|,\|y\|\right\}$.
\begin{lemm}[\cite{Chen2}, Lemma 3.3]\label{untrametric operator norm}
  Let $k$ be a field equipped with a non-archimedean absolute value $|\ndot|$, $U$ and $V$ be two $k$-linear ultranormed spaces of finite rank and $\phi:U\rightarrow V$ be a $k$-linear homomorphism. Let $m=\dim_k(U)$. For any integer $1\leqslant i\leqslant m$, let
  \[\lambda_i=\inf_{\begin{subarray}{c}W\subset U\\ \codim_U(W)=i-1\end{subarray}}\|\phi|_W\|.\]
  If $i>m$, let $\lambda_i=0$. Then for any integer $r>0$, we have
  \begin{equation}
    \left\|\wedge^r\phi\right\|\leqslant\prod_{i=1}^r\lambda_i.
  \end{equation}
\end{lemm}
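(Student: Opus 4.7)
The plan is to establish the estimate by constructing a basis of $U$ well adapted both to the ultrametric structure and to the filtration implicitly defined by the $\lambda_i$, then bounding $\|\wedge^r\phi\|$ through its action on pure wedges of basis vectors. First observe that the sequence $(\lambda_i)_{1\leqslant i\leqslant m}$ is non-increasing, since any codimension $i-1$ subspace contains a codimension $i$ subspace and the norm of $\phi$ restricted to a smaller subspace is no larger. It is also enough to treat the case $r\leqslant m$, for otherwise $\wedge^r\phi=0$ and the right-hand side vanishes by the convention $\lambda_i=0$ for $i>m$.

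Fix $\varepsilon>0$ and, for each $i=1,\ldots,m$, choose a subspace $W_i\subset U$ with $\codim_U(W_i)=i-1$ and $\|\phi|_{W_i}\|\leqslant\lambda_i+\varepsilon$. The heart of the argument is the construction of an ultrametric $\varepsilon$-orthogonal basis $e_1,\ldots,e_m$ of $U$ satisfying $\|\phi(e_i)\|\leqslant(\lambda_i+\varepsilon)\|e_i\|$ for every $i$. I would carry this out by a non-Archimedean Gram--Schmidt-type process: at each stage, intersect the infimum-realizing subspace $W_i$ with a complement of the span of the previously selected basis vectors, and pick $e_i$ from this intersection, then correct by a vector of strictly smaller norm drawn from the span already built, which is permitted without affecting the norm estimate on $\phi(e_i)$ by the ultrametric triangle inequality and restores $\varepsilon$-orthogonality.

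With such a basis in hand, I would expand any pure wedge $u_1\wedge\cdots\wedge u_r$ in the induced basis $\{e_{i_1}\wedge\cdots\wedge e_{i_r}\}_{i_1<\cdots<i_r}$ of $\wedge^r U$. Using the $\varepsilon$-orthogonality to bound $\|u_1\wedge\cdots\wedge u_r\|$ from below by a multiple of $\max_I|c_I|\prod_{j\in I}\|e_j\|$, and applying $\phi$ term by term with the submultiplicative estimate $\|\phi(e_{i_1})\wedge\cdots\wedge\phi(e_{i_r})\|\leqslant\prod_{j=1}^r\|\phi(e_{i_j})\|$, one obtains
\[
\|\phi(u_1)\wedge\cdots\wedge\phi(u_r)\|\leqslant(1-\varepsilon)^{-r}\max_{i_1<\cdots<i_r}\prod_{j=1}^{r}(\lambda_{i_j}+\varepsilon)\cdot\|u_1\wedge\cdots\wedge u_r\|.
\]
Since $(\lambda_i)$ is non-increasing, the maximum is attained at $(1,2,\ldots,r)$ and equals $\prod_{j=1}^r(\lambda_j+\varepsilon)$. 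Letting $\varepsilon\to 0^+$ yields the desired inequality.

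The main obstacle is the basis construction itself: the subspaces $W_i$ supplied by the definition of the $\lambda_i$ need not form a flag, so one must combine them with the non-Archimedean orthogonalization while preserving the norm bound $\|\phi(e_i)\|\leqslant(\lambda_i+\varepsilon)\|e_i\|$. The delicate point is to check, at each inductive step, that the codimension $i-1$ of $W_i$ still leaves enough room to accommodate the $i-1$ previously selected basis vectors up to $\varepsilon$-defect; this rests on standard facts about $\varepsilon$-orthogonal bases in finite-dimensional ultranormed spaces over an arbitrary non-Archimedean field.
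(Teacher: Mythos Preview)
The paper does not supply a proof of this lemma; it is simply quoted from \cite{Chen2}. So there is nothing in the present paper to compare your attempt against, and your proposal must stand on its own.

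Your overall plan---build an $\varepsilon$-orthogonal basis $(e_i)$ with $\|\phi(e_i)\|\leqslant(\lambda_i+\varepsilon)\|e_i\|$, then bound $\wedge^r\phi$ on pure wedges---is the natural one. But the basis-construction step, which you yourself flag as the delicate point, is not correctly sketched, and the issues are not merely bookkeeping.

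First, your ``correct by a vector of strictly smaller norm'' move does nothing for orthogonality: if $\|v\|<\|\tilde e_i\|$ then by the ultrametric inequality $\|\tilde e_i-v\|=\|\tilde e_i\|$, so neither the norm of the vector nor its distance to the span changes. Genuine orthogonalisation typically forces $\|v\|=\|\tilde e_i\|$. Second, once $\|v\|$ is allowed to equal $\|\tilde e_i\|$, your claim that the estimate on $\phi$ is preserved fails in the forward order: the correction $v$ lies in the span of the earlier $e_j$ with $j<i$, for which $\lambda_j\geqslant\lambda_i$; hence $\|\phi(v)\|$ is only bounded by $(\lambda_1+\varepsilon)\|v\|$, which can exceed $(\lambda_i+\varepsilon)\|\tilde e_i\|$. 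Third, in the forward order there is a dimension obstruction you do not address: for $i>\tfrac{m+2}{2}$ the subspace $W_i$ (of dimension $m-i+1$) may lie entirely inside $\operatorname{span}(e_1,\dots,e_{i-1})$ (of dimension $i-1$), so there is nothing to pick.

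Both the second and third obstacles disappear if you run the construction \emph{backwards}, choosing $e_m,e_{m-1},\dots,e_1$: then at step $i$ one has $\dim W_i=m-i+1>m-i=\dim\operatorname{span}(e_{i+1},\dots,e_m)$, so $W_i$ cannot be contained in that span; and the correction vector now lies in the span of $e_j$ with $j>i$, whence $\lambda_j\leqslant\lambda_i$ and the bound on $\phi$ survives. You still have to argue carefully that the orthogonalisation does not shrink $\|e_i\|$ relative to $\|\tilde e_i\|$ in a way that destroys the ratio $\|\phi(e_i)\|/\|e_i\|$; one way around this is to pick $\tilde e_i$ in $W_i$ already $\varepsilon$-orthogonal to $W_i\cap\operatorname{span}(e_{i+1},\dots,e_m)$. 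Alternatively, a cleaner route that avoids building a full adapted basis is to induct on $r$: pick a single $W$ of codimension $r-1$ with $\|\phi|_W\|\leqslant\lambda_r+\varepsilon$, take an $\varepsilon$-orthogonal splitting $U=W'\oplus W$, and for each basis wedge $e_I$ use the inductive bound $\|\wedge^k\phi\|\leqslant\prod_{i=1}^k\lambda_i$ on the $W'$-part (of size $k\leqslant r-1$) together with $\|\phi|_W\|^{r-k}\leqslant(\lambda_r+\varepsilon)^{r-k}$ on the $W$-part; monotonicity of the $\lambda_i$ then gives the claim.
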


In the remained part of this section, unless specially mentioned, we denote by $K$ a number field, and by $\O_K$ its ring of integers. We fix a Hermitian vector bundle $\overline{\sE}$ of rank $n+1$ over $\spec\O_K$, a closed integral subscheme $X$ of $\mathbb P(\sE_K)$, and the Zariski closure $\mathscr X$ of $X$ in $\mathbb P(\sE)$. We refer the readers to \cite[Lemma 16.9]{Salberger2015} for the original ideas of the construction.

 Let $\p$ be a maximal ideal of $\O_K$, $\f_\p$ be the residue field of $\O_K$ at $\p$. Let $\xi$ be an $\f_\p$-point of $\mathscr X$, and $k\in\mathbb N^+$. We suppose that $\{f_i\}_{1\leqslant i\leqslant k}$ is a family of local homomorphisms of $\O_{K,\p}$-algebras from $\O_{\mathscr X,\xi}$ to $\O_{K,\p}$. Let $\mathfrak a$ be the kernel of $f_1$, then we have $\O_{\mathscr X,\xi}/\mathfrak a\cong\O_{K,\p}$, which shows that $\mathfrak a$ is a prime ideal. Furthermore, since $\O_{\mathscr X,\xi}$ is a local ring with the maximal ideal $\sm_\xi$, we have $\sm_\xi\supseteq\mathfrak a$. Moreover, for $f_1$ is a local homomorphism, we have $\mathfrak a+\p\O_{\mathscr X,\xi}=\sm_\xi$.

In addition, we suppose that the point $\xi$ is regular in $\mathscr X$, which means $\O_{\mathscr X,\xi}$ is a regular local ring. In this case, the ideal $\mathfrak a$ is generated by $\dim\left(\O_{\mathscr X,\xi}\right)-1$ regular parameters (cf. \cite[Proposition 4.10]{LNM146}). Since these elements form a regular sequence on $\O_{\mathscr X,\xi}$ (cf. \cite[Chap. III, Proposition 6]{SerreLocAlg}), we have $\sym^m(\mathfrak a/\mathfrak a^2)\cong\mathfrak a^m/\mathfrak a^{m+1}$ as free $\O_{K,\p}$-modules for all $m\geqslant0$ by \cite[Chap. IV, \S2, Corollary 2.4]{FultonLang1985}, where we define $\mathfrak a^0=\O_{\mathscr X,\xi}$ for convenience.

Let $S=\O_{\mathscr X,\xi}\smallsetminus\mathfrak a$, and we denote by
\begin{equation}\label{R_X,xi}
R_{\mathscr X,\xi}=S^{-1}\left(\O_{\mathscr X,\xi}\right)\end{equation}
 the localization of $\O_{\mathscr X,\xi}$ at the prime ideal $\mathfrak a$. We denote by $m_\xi$ the maximal ideal of the ring $R_{\mathscr X,\xi}$, and then we have $m_\xi=\mathfrak a R_{\mathscr X,\xi}$ by the definition of this localization.

 Let $u\in S$ and $r\in\mathfrak a^m$ for every $m\geqslant0$. If $ur\in \mathfrak a^{m+1}$ is verified, since we have $(u+\mathfrak a)(r+\mathfrak a^{m+1})=\mathfrak a^{m+1}$, then we obtain $r\in \mathfrak a^{m+1}$. Therefore, we deduce
 \begin{equation}\label{m cap a}
   m_\xi^{m+1}\cap\mathfrak a^m=\left(\mathfrak a^{m+1}\cdot R_{\mathscr X,\xi}\right)\cap\mathfrak a^m=\mathfrak a^{m+1}
 \end{equation}
 for all $m\geqslant0$.

Let $E$ be a free sub-$\O_{K,\p}$-module of finite type of $\O_{\mathscr X,\xi}$ and let
\begin{equation}\label{f_i}
  f=(f_i|_E)_{1\leqslant i\leqslant k}:\;E\rightarrow\O_{K,\p}^k
\end{equation}
be an $\O_{K,\p}$-linear homomorphism. As $f_1$ is a homomorphism of $\O_{K,\p}$-algebras, it is surjective.

We consider $\left(E\cap\mathfrak a^j\right)/\left(E\cap \mathfrak a^{j+1}\right)$ and $\left(E\cap m_\xi^j\right)/\left(E\cap m_\xi^{j+1}\right)$ as two free $\O_{K,\p}$-modules, where we consider $E$ as a sub-$\O_{K,\p}$-module of $R_{\mathscr X,\xi}$ if it is necessary. Then we have the isomorphism of $\O_{K,\p}$-modules
\begin{eqnarray}\label{isom of E cap m}
  & &\left(E\cap\mathfrak a^j\right)/\left(E\cap \mathfrak a^{j+1}\right)\cong\left(E\cap\mathfrak a^j\right)/\left((E\cap \mathfrak a^{j})\cap (E\cap m_\xi^{j+1})\right)\\
  &\cong&\left((E\cap\mathfrak a^j)+(E\cap m_\xi^{j+1})\right)/\left(E\cap m_\xi^{j+1}\right)\cong\left(E\cap m_\xi^j\right)/\left(E\cap m_\xi^{j+1}\right)\nonumber
\end{eqnarray}
by \eqref{m cap a}, where we use the fact $\mathfrak a^jR_{\mathscr X,\xi}+m_\xi^{j+1}=m_\xi^j$ in $R_{\mathscr X,\xi}$.

Now we suppose that the reductions of all the above local homomorphisms $f_1,\ldots,f_k$ modulo $\p$ are same, which means all the composed homomorphisms $\O_{\mathscr X,\xi}\xrightarrow{f_i} \O_{K,\p}\rightarrow\f_\p$ are same for every $i=1,\ldots,k$, where the last arrow is the canonical reduction morphism modulo $\p$. Let $N(\p)=\#\f_\p$. In this case, the norm of the restriction of $f$ on $E\cap \mathfrak a^j$ is smaller than $N(\p)^{-j}$. In fact, for any $1\leqslant i\leqslant k$, we have $f_i(\mathfrak a)\subset\p\O_{K,\p}$, and hence we have $f_i(\mathfrak a^j)\subset\p^j\O_{K,\p}$.

From the above construction, we have the following result, which is a reformulation of the estimate in \cite[Lemma 16.9]{Salberger2015}.
\begin{prop}\label{upper bound of operator norm}
  Let $\p$ be a maximal ideal of $\O_K$, and $\xi\in\mathscr X(\f_\p)$ be a non-singular point. Suppose that $\{f_i\}_{1\leqslant i\leqslant k}$ is a family of local $\O_{K,\p}$-linear homomorphisms from $\O_{\mathscr X,\xi}$ to $\O_{K,\p}$ whose reductions module $\p$ are same. Let $E$ be a free sub-$\O_{K,\p}$-module of finite type of $\O_{\mathscr X,\xi}$, $f=(f_i|_E)_{1\leqslant i\leqslant k}$ be that defined in \eqref{f_i}, and $N(\p)=\#(\O_K/\p)$. We consider $E$ as a sub-$\O_{K,\p}$-module of $R_{\mathscr X,\xi}$, and let
  \begin{equation}\label{R(E)}
    \mathcal R_\xi(E)=\sum_{k=1}^\infty\dim_{K}\left(E\cap m_\xi^k\right)_{K}.
  \end{equation}
  Then if $r=\dim_K(E_K)$, we have
  \[\log\|\wedge^rf_K\|\leqslant -\mathcal R_\xi(E)\log N(\p).\]
\end{prop}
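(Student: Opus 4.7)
The plan is to apply Chen's ultra-metric multilinear estimate (Lemma \ref{untrametric operator norm}) to the $K_\p$-linear map $f_{K_\p} : E_{K_\p} \to K_\p^k$, bound the numbers $\lambda_i$ by choosing test subspaces inside the filtration $\{E \cap \a^j\}_{j \geq 0}$, and then convert the resulting combinatorial sum into $\mathcal R_\xi(E)$ using the isomorphism \eqref{isom of E cap m} already established above.

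First I would set $d_j = \dim_K(E \cap \a^j)_K$, which is a non-increasing, eventually zero sequence with $d_0 = r$. The basic local estimate has already been recorded in the discussion preceding the statement: since $\a = \ker f_1$ and all $f_i$ share the same reduction modulo $\p$, each $f_i$ sends $\a$ into $\p\O_{K,\p}$, hence $\a^j$ into $\p^j\O_{K,\p}$, so $\|f|_{E \cap \a^j}\|_\p \leqslant N(\p)^{-j}$ for every $j \geqslant 0$.

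Next, for each $i \in \{1,\ldots,r\}$, set $j(i) = \max\{j \geqslant 0 : d_j \geqslant r-i+1\}$. One can then pick an $(r-i+1)$-dimensional subspace $W \subset (E \cap \a^{j(i)})_K$, which has codimension $i-1$ in $E_K$, and the norm estimate above yields $\lambda_i \leqslant N(\p)^{-j(i)}$. Feeding this into Lemma \ref{untrametric operator norm} gives
\[
\log \|\wedge^r f_K\|_\p \;\leqslant\; -\log N(\p) \cdot \sum_{i=1}^r j(i).
\]
A layer-cake rearrangement rewrites $\sum_{i=1}^r j(i) = \sum_{j \geqslant 1} \#\{i : d_j \geqslant r-i+1\} = \sum_{j \geqslant 1} d_j$.

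Finally, I would invoke \eqref{isom of E cap m}, which identifies the graded pieces $(E \cap \a^j)/(E \cap \a^{j+1})$ with $(E \cap m_\xi^j)/(E \cap m_\xi^{j+1})$ as free $\O_{K,\p}$-modules. A telescoping argument then gives $\dim_K(E \cap \a^j)_K = \dim_K(E \cap m_\xi^j)_K$ for all $j \geqslant 0$, so $\sum_{j \geqslant 1} d_j = \mathcal R_\xi(E)$, which closes the proof. The argument is essentially bookkeeping once Chen's lemma is in place; the only mildly subtle point is the passage from the filtration by powers of $\a$ (which controls the local norm) to the filtration by powers of $m_\xi$ (which appears in $\mathcal R_\xi(E)$), and I do not expect any genuine obstacle there — as the author suggests, this is a reformulation of \cite[Lemma 16.9]{Salberger2015} in the slope-method language.
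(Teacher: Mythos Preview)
Your proposal is correct and essentially identical to the paper's own proof: both use the filtration $E \supset E\cap\mathfrak a \supset E\cap\mathfrak a^2 \supset \cdots$ together with the bound $\|f|_{E\cap\mathfrak a^j}\|\leqslant N(\p)^{-j}$, feed this into Lemma~\ref{untrametric operator norm}, and then convert the resulting sum via \eqref{isom of E cap m}. Your sequence $j(i)$ coincides with the paper's sequence $q_\xi(i)$, and your layer-cake rearrangement is exactly the ``elementary calculation'' the paper invokes to get $\sum q_\xi(m)=\sum_{m\geqslant1}\dim_K(E\cap m_\xi^m)_K$.
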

\begin{proof}
By the above notations and argument, we have the filtration
\[\mathcal F:\; E\supset E\cap \mathfrak a\supset\cdots\supset E\cap \mathfrak a^j\supset E\cap \mathfrak a^{j+1}\supset\cdots\]
of $E$, whose $j$-th subquotient $\left(E\cap\mathfrak a^j\right)/\left(E\cap \mathfrak a^{j+1}\right)$ is a free $\O_{K,\p}$-module. The restriction of $f$ on $E\cap \mathfrak a^j$ has norm smaller than $N(\p)^{-j}$. Meanwhile, let $\{q_\xi(m)\}_{m=1}^\infty$ be the series of non-negative integers where the integer $m$ appears exactly
\[\dim_K\left(E\cap m_\xi^m\right)_K-\dim_K\left(E\cap m_\xi^{m+1}\right)_K\]
 times. Then by the isomorphism \eqref{isom of E cap m}, the free $\O_{K,\p}$-modules $\left(E\cap\mathfrak a^j\right)/\left(E\cap\mathfrak a^{j+1}\right)$ and $\left(E\cap m_\xi^j\right)/\left(E\cap m_\xi^{j+1}\right)$ have the same rank for all $j\geqslant0$. Thus we have
 \begin{equation}\label{q without wedge}
   \inf_{\begin{subarray}{c}W\subset E_K\\ \codim_{E_K}(W)=j-1\end{subarray}}\|f_K|_W\|\leqslant N(\p)^{-q_\xi(j)}.
 \end{equation}
 Since the above filtration $\mathcal F$ is of finite length, then by an elementary calculation, we obtain the equality
\[\sum\limits_{m=1}^\infty q_\xi(m)=\sum\limits_{m=1}^\infty\dim_K\left(E\cap m_\xi^m\right)_K.\]
  Finally by applying Lemma \ref{untrametric operator norm} to \eqref{q without wedge}, we obtain the result.
\end{proof}
\subsection{Existence of auxiliary hypersurfaces}
In this part, we will reformulate the determinant method by the slope method. Different from \cite[Theorem 3.2]{Salberger07} and \cite[Theorem 16.12]{Salberger2015}, our estimate will depend on the term $\mathcal R_\xi(E)$ defined in \eqref{R(E)} for a special choice of $E$. In \S4, we will reformulate the estimate of $\mathcal R_\xi(E)$ for our application such that we are able to control the number of auxiliary hypersurfaces by this result. The strategy is similar to that of \cite[Theorem 3.1]{Chen2}.

The following slope equality is useful in this reformulation, which is obtained by the slope equalities and inequalities.
\begin{prop}[\cite{Chen1}, Proposition 2.2]\label{slope of evaluation map}
  Let $\overline E$ be a Hermitian vector bundle of rank $r>0$ over $\spec\O_K$, and $\{\overline L_i\}_{i\in I}$ be a family of Hermitian line bundles over $\spec\O_K$. If $\phi:\; E_K\rightarrow\bigoplus\limits_{i\in I}L_{i,K}$ is an injective homomorphism of $K$-vector spaces, then there exists a subset $I_0$ of $I$ whose cardinality is $r$ such that the equality
\[\wmu(\E)=\frac{1}{r}\left(\sum_{i\in I_0}\wmu(\overline L_i)+h\left(\wedge^r(\pr_{I_0}\circ\phi)\right)\right)\]
is verified, where $\pr_{I_0}:\;\bigoplus\limits_{i\in I}L_{i,K}\rightarrow\bigoplus\limits_{i\in I_0}L_{i,K}$ is the canonical projection.
\end{prop}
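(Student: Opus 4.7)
The strategy is to reduce the statement to the rank-one case, where the slope inequality becomes an equality: choose $I_0 \subset I$ so that $\pr_{I_0} \circ \phi$ becomes a $K$-linear isomorphism, and then apply the top exterior power to obtain an isomorphism of Hermitian line bundles, where the Arakelov degrees and the height are rigidly tied together by the product formula.

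\textbf{Step 1 (extracting $I_0$).} Since $\phi$ is injective with $\dim_K E_K = r$, the image $\phi(E_K)$ is an $r$-dimensional subspace of $\bigoplus_{i \in I} L_{i,K}$. After trivialising each $L_{i,K}$ and fixing a $K$-basis of $E_K$, the matrix of $\phi$ has rank $r$, so some $r$ columns form an invertible submatrix; let $I_0 \subseteq I$ be the corresponding set of indices. Then $\psi := \pr_{I_0} \circ \phi : E_K \to \bigoplus_{i \in I_0} L_{i,K}$ is an isomorphism of $r$-dimensional $K$-vector spaces.

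\textbf{Step 2 (passage to the determinant line bundle).} Taking the top exterior power gives a $K$-linear isomorphism
\[
\wedge^r \psi : \wedge^r E_K \longrightarrow \bigotimes_{i \in I_0} L_{i,K},
\]
between the generic fibres of the Hermitian line bundles $\wedge^r \overline E$ and $\bigotimes_{i \in I_0} \overline L_i$ on $\spec \O_K$, each equipped with the metrics induced from $\overline E$ and from the individual $\overline L_i$, respectively. For any nonzero $K$-linear map $\alpha : \overline M \to \overline N$ of Hermitian line bundles on $\spec \O_K$, the product formula applied to a $K$-generator of $M$ and to its image under $\alpha$ yields the exact identity
\[
\adeg_n(\overline M) - \adeg_n(\overline N) = h(\alpha).
\]
Applied to $\wedge^r \psi$, together with the standard compatibilities $\adeg_n(\wedge^r \overline E) = r\,\wmu(\overline E)$ and $\adeg_n\bigl(\bigotimes_{i \in I_0} \overline L_i\bigr) = \sum_{i \in I_0} \wmu(\overline L_i)$, and dividing by $r$, this produces the asserted equality.

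\textbf{Main obstacle.} The substantive point is the rank-one identity in Step 2: for higher rank, slope estimates are only inequalities, but at rank one the category of Hermitian line bundles is rigid enough that any $K$-isomorphism determines the two normalised Arakelov degrees up to its height. Verifying this requires carefully matching the paper's convention for $h(\ndot)$ with the metrics induced on $\wedge^r \overline E$ and on $\bigotimes_{i \in I_0} \overline L_i$ (in particular, checking that the induced Hermitian metric on the determinant of a direct sum of line bundles agrees with the tensor product of the metrics on the summands). Once this compatibility is confirmed, the remainder of the argument is routine linear algebra and unpacking of definitions.
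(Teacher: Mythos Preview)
Your argument is correct and is the standard proof of this identity. The paper itself does not prove this proposition; it simply cites it from \cite{Chen1}, so there is no ``paper's own proof'' to compare against. Your Step~1 is elementary linear algebra, and your Step~2 is exactly the right mechanism: once $\psi$ is a $K$-isomorphism, $\wedge^r\psi$ is a nonzero map between rank-one Hermitian bundles, where the slope inequality degenerates to the equality $\adeg_n(\overline M)=\adeg_n(\overline N)+h(\alpha)$ via the product formula. The metric compatibility you flag as the main obstacle is genuine but routine: the determinant of an orthogonal direct sum $\bigoplus_{i\in I_0}\overline L_i$ is canonically isometric to $\bigotimes_{i\in I_0}\overline L_i$ (an orthonormal frame $e_1,\ldots,e_r$ with $e_i\in L_{i,v}$ gives $\|e_1\wedge\cdots\wedge e_r\|_v=1=\prod_i\|e_i\|_v$), and the determinant metric on $\wedge^r\overline E$ is by definition the one used in the Arakelov degree. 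With those checks, the identity follows exactly as you wrote.
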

\
The following result is a refined determinant method, which follows the strategy of \cite[Theorem 3.1]{Chen2} by bringing the term $\mathcal R_\xi(E)$ defined in \eqref{R(E)} into the estimate.

Before providing the statement, we will introduce the operation below. Let $\overline{\sE}$ be a Hermitian vector bundle of rank $n+1$ over $\spec\O_K$, $X$ be a closed integral subscheme of $\mathbb P(\sE_K)$, and $\mathscr X$ be the Zariski closure of $X$ in $\mathbb P(\sE)$. We choose a $P\in X(K)$, and let $\mathcal P\in\mathscr X(\O_K)$ be the Zariski closure of $P$ in $\mathscr X$. If we say that the reduction of $P$ modulo a maximal ideal $\p$ of $\O_K$ is $\xi\in\mathscr X(\f_\p)$, we mean that we consider the reduction of $\mathcal P$ modulo $\p$, whose image is $\xi$. We will use this representation multiple times in this article below.
\begin{theo}\label{semi-global determinant method}
  We keep all the above notations. Let $\{\p_j\}_{j\in J}$ be a finite family of maximal ideals of $\O_K$, and $\{P_i\}_{i\in I}$ be a family of rational points of $X$ such that, for any $i\in I$ and any $j\in J$, the reduction of $P_i$ modulo $\p_j$ coincides with the same non-singular point $\xi_j\in\mathscr X(\f_{\p_j})$. Let $\F_D$ be that defined in Definition \ref{arithmetic hilbert function}, $\mathcal R_{\xi_j}(F_D)$ be that defined in \eqref{R(E)}, $r_1(D)=\rg(F_D)$, $N(\p_j)=\#(\O_K/\p_j)$, and the height function $h(\ndot)$ of rational points defined in \eqref{arakelov height} by Arakelov theory. If the inequality
    \begin{equation}
   \sup_{i\in I}h(P_i)<\frac{\wmu(\overline{F}_D)}{D}-\frac{\log r_1(D)}{2D}+\frac{1}{[K:\Q]}\sum_{j\in J}\frac{\mathcal R_{\xi_j}(F_{D})}{Dr_1(D)}\log N(\p_j)
  \end{equation}
  is verified for a positive integer $D$, then there exists a section $s\in E_{D,K}$ (see \eqref{definition of E_D} for its definition), which contains $\{P_i\}_{i\in I}$ but does not contain the generic point of $X$. In other words, $\{P_i\}_{i\in I}$ can be covered by a hypersurfaces of $\mathbb P(\sE_K)$ of degree $D$ which does not contain the generic point of $X$.
\end{theo}
\begin{proof}
We suppose the section predicted by this theorem does not exist. Then the evaluation map
\[f:\;F_{D,K}\rightarrow \bigoplus\limits_{i\in I}P_i^*\O_{\mathbb P(\sE_K)}(1)|_X^{\otimes D}\]
is injective. We can replace $I$ by one of its subsets such that the above homomorphism $f$ is an isomorphism.

For every $v\in M_{K,\infty}$, we have
\[\frac{1}{r_1(D)}\log\|\wedge^{r_1(D)}f\|_v\leqslant\log\|f\|_v\leqslant\log\sqrt{r_1(D)},\]
where the first inequality comes from Hadamard's inequality, and the second one is due to the definition of metrics of John introduced at \S\ref{basic setting}.

For every $v\in M_{K,f}$, let $\p$ be the maximal ideal of $\O_K$ corresponding to the place $v$. By definition, the isomorphism $f$ is induced by a homomorphism $\O_K$-modules
\[F_D\rightarrow\bigoplus_{i\in I}\mathcal P_i^*\O_{\mathbb P(\sE)}(1)|_{\mathscr X}^{\otimes D},\]
where $\mathcal P_i$ is the $\O_K$-point of $\mathscr X$ extending $P_i$. Hence for any maximal ideal $\p$, we have $\log\|\wedge^{r_1(D)}f\|_\p\leqslant0$.

We fix a $j\in J$. For each $i\in I$, the $\O_K$-point $\mathcal P_i$ defines a local homomorphism from $\O_{\mathscr X,\xi_j}$ to $\O_{K,\p_j}$ which is $\O_{K,\p_j}$-linear. By taking a local trivialization of $\O_{\mathbb P(\sE)}(1)|_{\mathscr X}^{\otimes D}$ at $\xi_j$, we identify $F_D$ as a sub-$\O_{K,\p_j}$-module of $\O_{\mathscr X,\xi_j}$. Then by Proposition \ref{upper bound of operator norm}, we have
\[\log\|\wedge^{r_1(D)}f\|_{\p_j}\leqslant-\mathcal R_{\xi_j}(F_D)\log N(\p_j).\]

From the above two upper bounds of the operator norms, combined with Proposition \ref{slope of evaluation map}, we obtain
\[\frac{\wmu(\F_D)}{D}\leqslant\sup_{i\in I}h(P_i)+\frac{1}{2D}\log r_1(D)-\frac{1}{[K:\Q]}\sum_{j\in J}\frac{\mathcal R_{\xi_j}(F_D)}{Dr_1(D)}\log N(\p_j),\]
which leads to a contradiction.
\end{proof}
\section{Estimates of $\mathcal R_{\xi_j}(F_D)$}
In order to apply Theorem \ref{semi-global determinant method}, more information about the term $\mathcal R_{\xi_j}(F_D)$ need to be gathered. The aim of this section is to give an asymptotic estimate of $\mathcal R_{\xi_j}(F_D)$, which reformulate a result of Salberger by a more implicit approach.
\subsection{Finiteness of $\mathcal R_{\xi_j}(F_D)$}
Formally, the sum in $\mathcal R_{\xi_j}(F_D)$ defined in \eqref{R(E)} is infinite. But since the filtration $\mathcal F$ introduced in the proof of Proposition \ref{upper bound of operator norm} is finite, then $\mathcal R_{\xi_j}(F_D)$ is essentially a finite sum. Then when the positive integer $m$ is large enough in $F_D\cap m_{\xi_j}^m$, it will be a zero module, so essentially it is a finite sum.

The following result is a reformulation of \cite[Lemma 16.10]{Salberger2015}, at which the case of cubic hypersurfaces in $\mathbb P^3$ was considered only.
\begin{prop}\label{R(E)-kernel}
  We keep all notations and conditions in Theorem \ref{semi-global determinant method}. Let $\eta_j\in X(K)$ be a rational point which specializes to $\xi_j$ with respect to the operation in Theorem \ref{semi-global determinant method}, $m_{\xi_j}$ be the maximal ideal of $R_{\mathscr X,\xi_j}$ defined in \eqref{R_X,xi}, and $\sn_{\eta_j}$ be the maximal ideal of $\O_{X}$ at the point $\eta_j$. Then for every $m\in\mathbb N^+$ and $j\in J$ in Theorem \ref{semi-global determinant method}, we have
  \begin{eqnarray*}
    \dim_K\left(F_{D}\cap m_{\xi_j}^m\right)_K&=&\dim_K\ker\left(F_{D,K}\rightarrow H^0\left(X,\O_{\mathbb P(\sE_K)}(1)|_X^{\otimes D}\otimes\O_{X}/\sn_{\eta_j}^m\right)\right)\\
    &\geqslant&\max\left\{0,r_1(D)-{d+m-1\choose m-1}\right\},
  \end{eqnarray*}
  where we identify $F_D$ as a sub-$\O_{K,\p_j}$-module of $\O_{\mathscr X,\xi_j}$ for the above $j\in J$.
\end{prop}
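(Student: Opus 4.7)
The plan is two steps: first, identify $(F_D\cap m_{\xi_j}^m)_K$ with the kernel of the natural evaluation map at the infinitesimal neighborhood $\sn_{\eta_j}^m$; second, bound the dimension of the target of that evaluation map from above, so that rank--nullity produces the claimed lower bound.

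For the first step, I would identify $R_{\mathscr X,\xi_j}=(\O_{\mathscr X,\xi_j})_{\a}$ with $\O_{X,\eta_j}$: the prime ideal $\a=\ker(\O_{\mathscr X,\xi_j}\to\O_{K,\p_j})$ cuts out the closure of $\{\eta_j\}$ in $\mathscr X$ through $\xi_j$, and inverting $\O_{\mathscr X,\xi_j}\setminus\a$ recovers the local ring of $\mathscr X$ at the generic point $\eta_j$ of that closure, which coincides with $\O_{X,\eta_j}$ (the closure does not meet the special fibre except at $\xi_j$, so the localisation sees only the generic fibre). Under this identification, $m_{\xi_j}=\a R_{\mathscr X,\xi_j}$ corresponds to $\sn_{\eta_j}$. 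The inclusion $F_D\subset\O_{\mathscr X,\xi_j}$ used in the proof of Theorem \ref{semi-global determinant method}, obtained from a local trivialization $\tau$ of $\O_{\mathbb P(\sE)}(1)^{\otimes D}$ near $\xi_j$, gives after tensoring with $K$ an inclusion $F_{D,K}\subset\O_{X,\eta_j}$ sending $s$ to $\tau^{-1}s$. A section $s\in F_{D,K}$ therefore lies in $m_{\xi_j}^m$ if and only if its germ at $\eta_j$ belongs to $\sn_{\eta_j}^m\O_{X,\eta_j}$, that is, if and only if $s$ vanishes at $\eta_j$ with multiplicity at least $m$. This is exactly the kernel condition for the evaluation map
\[F_{D,K}\longrightarrow H^0\bigl(X,\O_{\mathbb P(\sE_K)}(1)|_X^{\otimes D}\otimes\O_X/\sn_{\eta_j}^m\bigr),\]
yielding the equality.

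For the second step, the target of the above map is supported at the $K$-rational point $\eta_j$, and since $\O(D)$ is locally free of rank one, it is $K$-linearly isomorphic to $\O_{X,\eta_j}/\sn_{\eta_j}^m$. Regularity of $\xi_j$ in $\mathscr X$ descends to $\O_{X,\eta_j}$ by localization, so $\O_{X,\eta_j}$ is a regular local $K$-algebra of dimension $d$. The standard Hilbert--Samuel computation together with the hockey-stick identity $\sum_{i=0}^{m-1}\binom{d-1+i}{d-1}=\binom{d+m-1}{d}=\binom{d+m-1}{m-1}$ gives $\dim_K\O_{X,\eta_j}/\sn_{\eta_j}^m=\binom{d+m-1}{m-1}$, and rank--nullity then yields the lower bound $\max\{0,r_1(D)-\binom{d+m-1}{m-1}\}$. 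The main subtlety will be verifying that the local trivialization used to embed $F_D$ into $\O_{\mathscr X,\xi_j}$ is compatible with geometric evaluation at $\eta_j$ in the sense that the algebraic filtration $\{F_D\cap m_{\xi_j}^k\}_k$ matches the filtration of $F_{D,K}$ by order of vanishing at $\eta_j$; once this compatibility is in place the rest is a routine dimension count.
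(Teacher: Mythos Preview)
Your proposal is correct and follows essentially the same route as the paper. The only cosmetic difference is that the paper works with an explicit trivialization $s\mapsto s/T_0^D$ (where $T_0(\xi_j)\neq 0$) to realize $F_{D,K}$ as a subspace $W_D\subset R_{\mathscr X,\xi_j}$ and then bounds $\dim_K W_D/(W_D\cap m_{\xi_j}^m)$ by $\dim_K R_{\mathscr X,\xi_j}/m_{\xi_j}^m=\binom{d+m-1}{m-1}$, whereas you first make the identification $R_{\mathscr X,\xi_j}\cong\O_{X,\eta_j}$ (and $m_{\xi_j}\cong\sn_{\eta_j}$) explicit and then carry out the same dimension count on the $\O_{X,\eta_j}$ side; the two computations are literally the same once that identification is in place.
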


\begin{proof}
  Let $s_1,\ldots,s_{r_1(D)}\in F_D$ which generate $F_D$. Let $T_0,\ldots,T_n$ be the homogeneous coordinate of $\mathscr X\hookrightarrow\mathbb P(\sE)$. Without loss of generality, we suppose that $T_0(\xi_j)\neq0$ with respect to the canonical morphism. Let $r_i=s_i/T_0^D$ for all $i=1,\ldots,r_1(D)$. and $W_D\subset R_{\mathscr X,\xi_j}$ be the vector space over $K$ generated by the images of $r_1,\ldots,r_{r_1(D)}$ in $R_{\mathscr X,\xi_j}$, which is also of dimension $r_1(D)$. Thus for each $s\in F_D$, its image in $H^0\left(X,\O_{\mathbb P(\sE_K)}(1)|_X^{\otimes D}\otimes\O_{X}/\sn_{\eta_j}^m\right)$ is zero if and only if $s/T_0^D\in\ker\left(W_D\rightarrow W_D/m_{\xi_j}^m\right)$ considered as an element in $R_{\mathscr X,\xi_j}$, which means it is verified if and only if $s/T_0^D\in W_D\cap m_{\xi_j}^m$. Thus there exists an isomorphism of $K$-vector spaces from $F_{D,K}$ to $W_D$, which maps $\ker\left(F_{D,K}\rightarrow H^0\left(X,\O_{\mathbb P(\sE_K)}(1)|_X^{\otimes D}\otimes\O_{X}/\sn_{\eta_j}^m\right)\right)$ onto $W_D\cap m_{\xi_j}^m$, and then we obtain the first equality in the assertion.

  By the fact that the point $\xi_j$ is regular in $\mathscr X$ and $\dim(X)=d$, then the point $\eta_j$ is also regular in $X$, and the ring $R_{\mathscr X,\xi_j}$ is a regular local ring of Krull dimension $d$. By these facts, we have $\dim_{K}\left(R_{\mathscr X,\xi_j}/m_{\xi_j}^m\right)={d+m-1\choose m-1}$ for all $m\in\mathbb N^+$. Furthermore, we have $\dim_K\left(W_D/\left(W_D\cap m_{\xi_j}^m\right)\right)\leqslant\dim_K\left(R_{\mathscr X,\eta_j}/m_{\xi_j}^m\right)$. Hence we have
  \begin{eqnarray*}
    \dim_K\left(F_D\cap m_{\xi_j}^m\right)_K&=&\dim_K\left(W_D\right)-\dim_K\left(W_D/\left(W_D\cap m_{\xi_j}^m\right)\right)\\
    &\geqslant& r_1(D)-{d+m-1\choose m-1},
  \end{eqnarray*}
  which completes the proof.
\end{proof}

\subsubsection*{Connection with Seshadri constant}
 In this part, we will give a lower bound of the positive integer $m$ such that
 \[\dim_K\left(F_{D}\cap m_{\xi_j}^m\right)_K=\dim_K\ker\left(F_{D,K}\rightarrow H^0\left(X,\O_{\mathbb P(\sE_K)}(1)|_X^{\otimes D}\otimes\O_{X}/\sn_{\eta_j}^m\right)\right)\]
  are both zero, where all the above notations are same as those in Proposition \ref{R(E)-kernel}. For this target, we will introduce some notions on the geometric positivity of line bundles. We refer the readers to \cite[\S5.1]{LazarsfeldI} for a systemic introduction to it.

Let $X$ be an closed integral projective scheme over a field, $L$ be a line bundle on $X$, and $\xi\in X$ be a regular point with the maximal ideal $\sn_\xi\subset \O_X$. We consider the natural map
\begin{equation}\label{separates s-jets}
  H^0\left(X,L\right)\rightarrow H^0\left(X,L\otimes\O_X/\sn_\xi^{s+1}\right)
\end{equation}
taking the global sections of $L$ to their $s$-jets at $\xi$. By definition, the kernel of the map \eqref{separates s-jets} is $H^0\left(X, L\otimes \sn_\xi^{s+1}\right)$.

In addition, let $L$ be a nef line bundle on $X$. We fix a closed point $\xi\in X$, and let $\pi:\;\widetilde X\rightarrow X$ be the blowing up at $\xi$, and $E=\pi^{-1}(\xi)$ be the exceptional divisor. We define the \textit{Seshadri constant} of $L$ at $\xi$ as
\begin{equation}
  \epsilon(X,L;\xi)=\epsilon(L,\xi)=\sup\{\epsilon>0|\;\pi^*L-\epsilon E\hbox{ is nef }\}.
\end{equation}

By \cite[Proposition 5.1.5]{LazarsfeldI}, we have
\begin{equation}\label{seshadri constant by intersection}
  \epsilon(L;\xi)=\inf_{\xi\in C\subseteq X}\left\{\frac{(L\cdot C)}{\mu_\xi(C)}\right\},
\end{equation}
where $C$ takes over all integral curves $C\subseteq X$ passing through $\xi$, and $\mu_\xi(C)$ is the multiplicity of $\xi$ in $C$, see \S \ref{local algebra} for the definition.

Some properties of the Seshadri constant will be useful in the proof of the proposition below.
\begin{prop}\label{bound of finite sum}
  With all the notations and conditions in Proposition \ref{R(E)-kernel}, when $m\geqslant \left[\sqrt[d]{\delta}D\right]+1$, we have
  \[\ker\left(F_{D,K}\rightarrow H^0\left(X,\O_{\mathbb P(\sE_K)}(1)|_X^{\otimes D}\otimes\O_{X}/\sn_{\eta_j}^m\right)\right)=0,\]
  where $\left[s\right]$ denotes the largest integer smaller than $s$.
\end{prop}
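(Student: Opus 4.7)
The plan is to identify the kernel with the space of global sections of an appropriate line bundle on the blow-up of $X$ at $\eta_j$, and then use intersection theory on a suitable curve through $\eta_j$ together with the Seshadri bound to force a contradiction when $m/D > \sqrt[d]{\delta}$.

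Following the computation in the proof of Proposition \ref{R(E)-kernel}, I would first identify
\[\ker\Bigl(F_{D,K}\to H^0\bigl(X,\O_{\mathbb P(\sE_K)}(1)|_X^{\otimes D}\otimes\O_X/\sn_{\eta_j}^m\bigr)\Bigr)\;\simeq\;H^0(\widetilde X,\,D\pi^*H-mE),\]
where $\pi:\widetilde X\to X$ is the blow-up at the regular point $\eta_j$ with exceptional divisor $E$. This reduces the goal to proving $H^0(\widetilde X,D\pi^*H-mE)=0$ whenever $m>\sqrt[d]{\delta}\,D$.

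Assume for contradiction that a nonzero section $s$ exists, giving an effective Cartier divisor $\Delta_s\sim D\pi^*H-mE$ on $\widetilde X$. For any integral curve $C\subset X$ passing through $\eta_j$ with strict transform $\widetilde C$, the standard intersection computation on the blow-up gives
\[\Delta_s\cdot\widetilde C\;=\;D\deg(C)-m\,\mu_{\eta_j}(C),\]
using $\pi^*H\cdot\widetilde C=H\cdot C=\deg(C)$ and $E\cdot\widetilde C=\mu_{\eta_j}(C)$. Whenever $\widetilde C\not\subset\mathrm{supp}(\Delta_s)$, the left-hand side is $\geq0$, i.e.\ $\deg(C)/\mu_{\eta_j}(C)\geq m/D$. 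Contrapositively, any curve $C$ with $\deg(C)/\mu_{\eta_j}(C)<m/D$ must lie in $\mathrm{supp}(\Delta_s)$.

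The existence of curves with small $\deg/\mu$ ratio is supplied by Seshadri's theory. The classical upper bound $\epsilon(H;\eta_j)\leq\sqrt[d]{H^d}=\sqrt[d]{\delta}$ together with the characterization \eqref{seshadri constant by intersection} produces integral curves $C$ through $\eta_j$ with $\deg(C)/\mu_{\eta_j}(C)$ arbitrarily close to (or at most) $\sqrt[d]{\delta}$; since by hypothesis $\sqrt[d]{\delta}<m/D$, these curves satisfy the strict numerical inequality needed. To finish, one exhibits such a curve whose strict transform is \emph{not} contained in $\mathrm{supp}(\Delta_s)$, yielding the sought-after contradiction.

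The main obstacle is precisely this last genericity step: a priori the Seshadri infimum could be achieved only by curves that happen to sit inside the particular effective divisor $\Delta_s$. The strategy to overcome it is to work with a \emph{covering} family of curves through $\eta_j$ realizing (or asymptotically realizing) the Seshadri bound. Since $\mathrm{supp}(\Delta_s)\cap X$ is a proper Zariski closed subset of $X$, a generic member of such a family escapes it; this passage from "some curve with small ratio" to "some such curve not sitting inside $\Delta_s$" is the delicate point, and is what distinguishes the present general projective setting from the hypersurface case dealt with in \cite{Salberger2015}.
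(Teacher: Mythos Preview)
Your argument parallels the paper's own proof: the paper also routes through the Seshadri constant, asserting directly (its inequality \eqref{seshadri constant>multiplicity}) that the maximal multiplicity $\mu_{\eta_j}(|DH|)$ is bounded by $\epsilon(DH;\eta_j)=D\,\epsilon(H;\eta_j)\leqslant D\sqrt[d]{\delta}$. You have unfolded this into the curve-by-curve intersection argument on $\widetilde X$ and, unlike the paper, correctly isolated the missing step: one must exhibit a curve $C$ through $\eta_j$ with $\deg(C)/\mu_{\eta_j}(C)<m/D$ whose strict transform is \emph{not} contained in the given effective divisor $\Delta_s$.

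This gap is genuine and your proposed covering-family fix does not close it, because the Seshadri infimum can be computed by a single rigid curve. In fact the proposition as stated is false. Take $X$ to be the smooth cubic scroll in $\mathbb P^4_K$ ($d=2$, $\delta=3$), identified with $\mathrm{Bl}_p\mathbb P^2$ embedded by $|2H_0-E_0|$; here $F_{1,K}=H^0(X,H)$ since the scroll is linearly normal. At a rational point $\eta$ on a ruling line $\ell$ (the strict transform of $\overline{pq}$) away from the directrix $E_0$, the Seshadri constant is $\epsilon(H;\eta)=1$, realized only by the rigid curve $\ell$; yet the hyperplane section $\ell+\ell'\in|H|$ (with $\ell'$ the strict transform of any other line through $q$) has multiplicity $2$ at $\eta$. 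Thus with $D=1$ and $m=2=\bigl[\sqrt{3}\,\bigr]+1$ the kernel is nonzero, and more generally $D(\ell+\ell')$ shows the kernel is nonzero for every $m\leqslant 2D$. What \emph{is} true, and is all that Theorem~\ref{R(E) by GIT} actually needs, is the weaker linear bound $m\leqslant D\delta$: a general complete-intersection curve $C\subset X$ through $\eta_j$ cut out by $d-1$ hyperplanes has $\deg(C)=\delta$, $\mu_{\eta_j}(C)=1$, and is not contained in any fixed $\Delta_s$, so your own intersection inequality gives $D\delta-m\geqslant0$.
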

\begin{proof}
By the definition of $F_{D,K}$ induced in \eqref{evaluation map}, the $K$-vector space $F_{D,K}$ is a sub-$K$-vector space of $H^0\left(X,\O_{\mathbb P(\sE_K)}(1)|_X^{\otimes D}\right)$, so it is enough to prove the bound for the $K$-linear map
\begin{equation*}
  H^0\left(X,\O_{\mathbb P(\sE_K)}(1)|_X^{\otimes D}\right)\rightarrow H^0\left(X,\O_{\mathbb P(\sE_K)}(1)|_X^{\otimes D}\otimes\O_{X}/\sn_{\eta_j}^m\right).
\end{equation*}
In other words, we need a bound of $m\in\mathbb N$ such that $H^0\left(X, \O_{\mathbb P(\sE_K)}(1)|_X^{\otimes D}\otimes \sn_{\eta_j}^{m}\right)$ is zero.

By definition, the space $H^0\left(X, \O_{\mathbb P(\sE_K)}(1)|_X^{\otimes D}\otimes \sn_{\eta_j}^{m}\right)$ is zero when $m$ is strictly larger than the possibly maximal multiplicity of the point $\eta_j$ in the divisors which are linearly equivalent to $\O_{\mathbb P(\sE_K)}(1)|_X^{\otimes D}$. We denote by $\mu_{\eta_j}\left(\left|\O_{\mathbb P(\sE_K)}(1)|_X^{\otimes D}\right|\right)$ the above maximal multiplicity. By \cite[Corollary 12.4]{Fulton} and \eqref{seshadri constant by intersection}, we have
\begin{equation}\label{seshadri constant>multiplicity}
  \mu_{\eta_j}\left(\left|\O_{\mathbb P(\sE_K)}(1)|_X^{\otimes D}\right|\right)\leqslant\epsilon\left(\O_{\mathbb P(\sE_K)}(1)|_X^{\otimes D},\eta_j\right),
\end{equation}
where we consider the intersection in the regular locus of $X$, and the multiplicity of a point in pure-dimensional schemes is considered at \cite[Corollary 12.4]{Fulton}. In addition, the multiplicity satisfies the additivity of cycles by \cite[Chap. VIII, \S 7, $\mathrm{n}^\circ$ 1, Prop. 3]{Bourbaki83}.

By \cite[Example 5.1.4]{LazarsfeldI}, we have
\begin{equation}\label{homogenesous of seshadri constant}
  \epsilon\left(\O_{\mathbb P(\sE_K)}(1)|_X^{\otimes D};\eta_j\right)=D\epsilon\left(\O_{\mathbb P(\sE_K)}(1)|_X;\eta_j\right).
\end{equation} By \cite[Proposition 5.1.9]{LazarsfeldI}, we have
\begin{equation}\label{bound of seshadri constant}
  \epsilon\left(\O_{\mathbb P(\sE_K)}(1)|_X;\eta_j\right)\leqslant\sqrt[d]{\frac{\O_{\mathbb P(\sE_K)}(1)|_X^d}{\mu_{\eta_j}\left(X\right)}}=\sqrt[d]{\delta},
\end{equation}
for $\eta_j$ is regular in $X$ and $\deg(X)=\delta$ with respect to $\O(1)$.

By \eqref{seshadri constant>multiplicity}, \eqref{homogenesous of seshadri constant} and \eqref{bound of seshadri constant}, when $m\geqslant \left[\sqrt[d]{\delta}D\right]+1$, we have $m>\mu_{\eta_j}\left(|\O_{\mathbb P(\sE_K)}(1)|_X^{\otimes D}|\right)$, and we will have the trivial kernel in this case.
\end{proof}
\subsection{Invariants induced by blowing up}
Let $\overline \sE$ be a Hermitian vector bundle of rank $n+1$ over $\spec\O_K$, $X$ be a closed integral subscheme of $\mathbb P(\sE_K)$ of dimension $d$ and degree $\delta$, and $\mathscr X$ be the Zariski closure of $X$ in $\mathbb P(\sE)$. If the positive integer $D$ is large enough, then we have $F_D=H^0\left(\mathscr X,\O_{\mathbb P(\sE)}(1)|_{\mathscr X}^{\otimes D}\right)$ and $F_{D,K}=H^0\left(X,\O_{\mathbb P(\sE_K)}(1)|_X^{\otimes D}\right)$, where $F_D$ and $F_{D,K}$ are defined in Definition \ref{arithmetic hilbert function}. By this fact, we will give an alternative description of the term $\mathcal R_{\xi_j}(F_D)$ in Theorem \ref{semi-global determinant method}.

Let $\eta\in X(K)$ be non-singular, $\sn_{\eta}$ be the maximal ideal of $\O_X$ at the point $\eta$, and
\begin{equation}\label{blowing up at one point}
  \pi:\widetilde{X}\rightarrow X
\end{equation}
be the blowing up of $X$ at $\eta$. Let $E=\pi^{-1}(\eta)$ be the exceptional divisor of the above blowing up morphism $\pi$, and $I_E\subset\O_{\widetilde{X}}$ be the ideal sheaf of $E\subset \widetilde{X}$. By the projection formula (cf. \cite[Chap. III, Exercise 8.3]{GTM52}) applied at \eqref{blowing up at one point}, we have $R^i\pi_*\left(\pi^*\left(\O_{\mathbb P(\sE_K)}(1)|_X^{\otimes D}\right)\right)=0$ for all $i\geqslant1$, and it deduces $\pi_*\left(\pi^*\left(\O_{\mathbb P(\sE_K)}(1)|_X^{\otimes D}\right)\right)=\O_{\mathbb P(\sE_K)}(1)|_X^{\otimes D}$. So we obtain
\[H^0\left(X,\O_{\mathbb P(\sE_K)}(1)|_X^{\otimes D}\right)\cong H^0\left(\widetilde{X},\pi^*\left(\O_{\mathbb P(\sE_K)}(1)|_X^{\otimes D}\right)\right).\]
From the above isomorphism, we have the commutative diagram
\[\xymatrix{H^0\left(X,\O_{\mathbb P(\sE_K)}(1)|_X^{\otimes D}\right)\ar[d]\ar[r]&H^0\left(X,\O_{\mathbb P(\sE_K)}(1)|_X^{\otimes D}\otimes\O_{X}/\sn_\eta^m\right)\ar[d]\\H^0\left(\widetilde{X},\pi^*\left(\O_{\mathbb P(\sE_K)}(1)|_X^{\otimes D}\right)\right)\ar[r]&H^0\left(\widetilde{X},\pi^*\left(\O_{\mathbb P(\sE_K)}(1)|_X^{\otimes D}\right)\otimes\O_{\widetilde{X}}/I_E^m\right),}\]
where the kernel of the bottom map is isomorphic to $H^0\left(\widetilde{X}, \pi^*\left(\O_{\mathbb P(\sE_K)}(1)|_X^{\otimes D}\right)\otimes I_E^m\right)$ for $m\geqslant1$. By the above argument, we have the following result.
\begin{prop}\label{kernel-blowup}
  With all the above notations, we have
  \begin{eqnarray*}
    & &\dim_K\left(H^0\left(\widetilde{X}, \pi^*(\O_{\mathbb P(\sE_K)}(1)|_X^{\otimes D})\otimes I_E^m\right)\right)\\
    &=&\dim_K\ker\left(H^0\left(X,\O_{\mathbb P(\sE_K)}(1)|_X^{\otimes D}\right)\rightarrow H^0\left(X,\O_{\mathbb P(\sE_K)}(1)|_X^{\otimes D}\otimes\O_{X}/\sn_{\eta}^m\right)\right)
  \end{eqnarray*}
  for all $m\geqslant1$.
\end{prop}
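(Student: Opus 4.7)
The plan is to upgrade the commutative diagram displayed just above the statement into an equality of pushforward sheaves on $X$, thereby identifying both kernels with the global sections of one and the same coherent sheaf.

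First, write $\mathcal{L}=\O_{\mathbb P(\sE_K)}(1)|_X^{\otimes D}$. Tensoring the short exact sequence $0\to\sn_\eta^m\to\O_X\to\O_X/\sn_\eta^m\to 0$ with the locally free sheaf $\mathcal{L}$ and taking cohomology yields
\[\ker\Bigl(H^0(X,\mathcal{L})\to H^0(X,\mathcal{L}\otimes\O_X/\sn_\eta^m)\Bigr)=H^0(X,\mathcal{L}\otimes\sn_\eta^m),\]
which is a repackaging of the right-hand side of the proposition.

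Second, I would identify $H^0(\widetilde{X},\pi^*\mathcal{L}\otimes I_E^m)$ with $H^0(X,\mathcal{L}\otimes\sn_\eta^m)$ by pushing down along $\pi$. Since $H^0(\widetilde{X},F)=H^0(X,\pi_*F)$ for any coherent sheaf $F$, it suffices to produce a natural isomorphism
\[\pi_*\bigl(\pi^*\mathcal{L}\otimes I_E^m\bigr)\;\cong\;\mathcal{L}\otimes\sn_\eta^m.\]
By the projection formula, which applies since $\mathcal{L}$ is locally free, the left-hand side equals $\mathcal{L}\otimes\pi_*I_E^m$, so the whole matter reduces to the sheaf-theoretic identity $\pi_*I_E^m=\sn_\eta^m$.

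The main technical point, and the only step that genuinely uses the hypothesis that $\eta$ is regular in $X$, is this last identity. I would verify it locally: on an affine neighbourhood $\spec A$ of $\eta$ in $X$ on which $\sn_\eta$ is generated by a regular sequence $x_1,\ldots,x_d$, the scheme $\widetilde{X}$ is covered by the standard affine charts $\spec A[x_1/x_i,\ldots,x_d/x_i]$ on which $I_E$ is principal, generated by $x_i$. A direct gluing computation across these charts -- or equivalently an argument through the Rees algebra $\bigoplus_{n\geqslant 0}\sn_\eta^n$ whose grading identifies the degree-$m$ part of the pushforward with $\sn_\eta^m$ -- yields the equality. Chaining the three steps then produces the desired equality of dimensions for every $m\geqslant 1$.
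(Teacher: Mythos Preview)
Your argument is correct and is essentially the same as the paper's: both rely on the projection formula along $\pi$ and on the behaviour of the blow-up at a regular point, and the paper's commutative-diagram step implicitly uses exactly the identity $\pi_*I_E^m=\sn_\eta^m$ that you isolate and verify explicitly. If anything, your version is slightly more transparent, since the paper leaves the injectivity of the right vertical arrow (equivalently, that identity) unstated.
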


\subsection{The volume of certain line bundles}
In this part, we will give a connection between the above invariant $\mathcal R_{\xi_j}(F_D)$ in Theorem \ref{semi-global determinant method} and the volume of certain line bundles.

\subsubsection{Definition of volume function}
In the first step, we will recall the definition of the volume of line bundles on projective varieties at \cite[Definition 2.2.31]{LazarsfeldI}. For more details about this notion, see \cite[\S 2.2.C]{LazarsfeldI}.

Let $X$ be a projective integral scheme of dimension $d$ over a field, and $L$ be a line bundle on $X$. We denote by $h^0\left(X, L\right)=\dim H^0\left(X,L\right)$ for simplicity. Then the \textit{volume} of the line bundle $L$ is defined to be the non-negative number
\begin{equation}\label{definition of volume}
  \vol\left(L\right)=\vol_X\left(L\right)=\limsup_{D\rightarrow\infty}\frac{h^0\left(X,L^{\otimes D}\right)}{D^d/d!}.
\end{equation}
Meanwhile, if $E$ is a Cartier divisor on $X$, we denote the volume by $\vol(E)$ or $\vol_X(E)$ for simplicity, or by passing $\O_X(E)$.

Let $NS(X)$ be the N\'eron-Severi group of $X$ (see \cite[Definition 1.1.15]{LazarsfeldI} for its definition). By \cite[Proposition 2.2.41]{LazarsfeldI}, the volume of a line bundle only depends on its class in N\'eron-Severi group. Let $NS(X)_{\mathbb R}=NS(X)\otimes_{\mathbb Z}\mathbb R$. By \cite[Corollary 2.2.45]{LazarsfeldI}, the volume function defined in \eqref{definition of volume} can be extended uniquely to a continuous function
\begin{equation}\label{volume over R}
  \vol:\; NS(X)_{\mathbb R}\rightarrow \mathbb R,
\end{equation}
where Cartier $\mathbb R$-divisors (see \cite[\S1.3.B]{LazarsfeldI} for its definition) are considered above.
\subsubsection{Dependence on the reduction}\label{depending on reduction}
We keep all the notations as above. Let $H$ be a Cartier divisor on $X$ given by a hyperplane section in $\mathbb P(\sE_K)$. Let $\eta_1,\eta_2\in X(K)$ be non-singular, and $\pi_1:\; \widetilde{X}_1\rightarrow X$ and $\pi_2:\; \widetilde{X}_2\rightarrow X$ be the blowing ups of $X$ at $\eta_1$ and $\eta_2$ respectively, with respect to the exceptional divisors $E_1\subset\widetilde{X}_1$ and $E_2\subset\widetilde{X}_2$. By Proposition \ref{R(E)-kernel} and Proposition \ref{kernel-blowup}, if two rational points of $\eta_1,\eta_2\in X(K)$ have the same non-singular specialization modulo a maximal ideal of $\O_K$ in the sense of Theorem \ref{semi-global determinant method}, then we have
\[h^0\left(\widetilde{X}_1, D\pi^*_1(H)-mE_1\right)=h^0\left(\widetilde{X}_2, D\pi^*_2(H)-mE_2\right)\]
for every $D,m\in\mathbb N$, which means it only depends on its specialization by the operation of Theorem \ref{semi-global determinant method}.
\subsubsection{Pseudo-effective thresholds}
By the fact stated in \S \ref{depending on reduction} above, we will introduce the following invariant.

\begin{defi}\label{I(X)}
  Let $X$ be a closed integral subscheme of $\mathbb P(\sE_K)$ over the number field $K$, $\eta\in X(K)$ whose specialization modulo $\p\in\spm\O_K$ is the non-singular point $\xi$ in the sense of Theorem \ref{semi-global determinant method}, $\pi:\;\widetilde X\rightarrow X$ be the blowing up at $\eta$, and $E\subset\widetilde X$ be its exceptional divisor. Let $H$ be a Cartier divisor on $X$ given by a hyperplane section in $\mathbb P(\sE_K)$. We define
  \[I_X(H,\xi)=\int_0^\infty\vol\left(\pi^*H-\lambda E\right)d\lambda,\]
  where the above volume function $\vol(\ndot)$ follows the extended definition introduced in \eqref{volume over R} over $\widetilde X$.
\end{defi}
\begin{rema}[History of $I_X(H,\xi)$]\label{history of I}
  To the author's knowledge, the invariant $I_X(H,\xi)$ given in Definition \ref{I(X)} is first introduced by Per Salberger in 2006 at a talk in Mathematical Science Research Institute (MSRI), Berkeley, USA. In \cite[\S4]{McKinnonRoth_2015}, D. Mckinnon and M. Roth also introduced this invariant for the research of Diophantine approximations over higher dimensional projective varieties, which is a generalization of Roth's theorem. In \cite{McKinnonRoth_2015}, they use the notation $\beta_x(L)=I_X(L,x)/\vol_X(L)$ for a closed point $x$ and an ample line bundle $L$.
\end{rema}

\subsection{The dominant term of $\mathcal R_{\xi_j}(F_D)$}
We keep all the above notations and conditions. We will give an asymptotic estimate of $\mathcal R_{\xi_j}(F_D)$ defined in \eqref{R(E)} by the invariant $I_X(H,\xi_j)$, where $j\in J$ is given in Theorem \ref{semi-global determinant method}.

\begin{theo}\label{R(E) by GIT}
  Let $X$ be a closed integral subscheme of $\mathbb P(\sE_K)$ of dimension $d$ and degree $\delta$ over a number field $K$. Let $F_D$ be the same as that in Theorem \ref{semi-global determinant method}, $\mathcal R_{\xi_j}(F_D)$ be defined in \eqref{R(E)}, where $j\in J$ and $\xi_j\in\mathscr X(\f_{\p_j})$ are same as those in Theorem \ref{semi-global determinant method}, and $H$ be a Cartier divisor on $X$ given by a hyperplane section in $\mathbb P(\sE_K)$. Then we have
  \[\mathcal R_{\xi_j}(F_D)=\frac{I_X(H,\xi_j)}{d!}D^{d+1}+O_{d,\delta}(D^d),\]
  where $I_X(H,\xi_j)$ is defined in Definition \ref{I(X)}.
\end{theo}
\begin{proof}
Let $\eta\in X(K)$ be a rational point whose reduction modulo $\p$ is $\xi$ in the sense of Theorem \ref{semi-global determinant method}, $\pi:\widetilde X\rightarrow X$ be the blowing up of $X$ at $\eta$, $E=\pi^{-1}(\eta)$ be the exceptional divisor of $\pi$. If denote $B=H^0(X,\O_X)$ and let $\dim B$ be the Krull dimension of the ring $B$, then by \cite[Lemma 2.1]{Poonen2004}, when $D\geqslant\dim B-1$, we have $F_D=H^0\left(\mathscr X,\O_{\mathbb P(\sE)}(1)|_{\mathscr X}^{\otimes D}\right)$ and $F_{D,K}=H^0\left(X,\O_{\mathbb P(\sE_K)}(1)|_X^{\otimes D}\right)$. Then by Proposition \ref{R(E)-kernel} and Proposition \ref{kernel-blowup}, we have
  \[\mathcal R_{\xi_j}(F_D)\sim\sum_{m=1}^\infty h^0\left(\widetilde X, D\pi^*H-mE\right)\]
when $D$ tends into infinite.

  Since $\vol(\pi^*H)=\vol(H)=\delta$, we have
  \[h^0\left(X,DH\right)=h^0\left(\widetilde X, D\pi^*H\right)=\frac{\delta}{d!}D^d+O_{d,\delta}(D^{d-1}).\]
  Meanwhile, if $m\geqslant1$, we have $0\leqslant h^0\left(\widetilde X, D\pi^*H-mE\right)\leqslant h^0\left(\widetilde X, D\pi^*H\right)$ and $\vol\left(\pi^*H-\frac{m}{D}E\right)\leqslant\vol\left(\pi^*H\right)=\delta$ for every $m\in\mathbb N$. Then by the definition of volume at \eqref{definition of volume}, when $m=1,\ldots,\left[\sqrt[d]{\delta}D\right]+1$, we have
  \begin{equation}\label{volume of DH-mE}
  h^0\left(\widetilde X, D\pi^*H-mE\right)=\frac{D^d}{d!}\vol\left(\pi^*H-\frac{m}{D}E\right)+O_{d,\delta}(D^{d-1}),
  \end{equation}
where $[s]$ denotes the largest integer smaller than $s\in \mathbb R$.

  By Proposition \ref{bound of finite sum}, we have
  \[\sum_{m=1}^\infty h^0\left(\widetilde X, D\pi^*H-mE\right)=\sum_{m=1}^{\left[\sqrt[d]{\delta}D\right]+1} h^0\left(\widetilde X, D\pi^*H-mE\right).\]

  By the estimate of remainder term in \eqref{volume of DH-mE} and Definition \ref{I(X)}, we have
  \begin{eqnarray*}
 \sum_{m=1}^{\left[\sqrt[d]{\delta}D\right]+1} h^0\left(\widetilde X, D\pi^*H-mE\right)&=&\frac{D^d}{d!}\sum_{m=1}^\infty\vol\left(\pi^*H-\frac{m}{D}E\right)+O_{d,\delta}(D^d)\\
  &=&\frac{I_X(H,\xi_j)}{d!}D^{d+1}+O_{d,\delta}(D^d),
\end{eqnarray*}
and we obtain the result.
\end{proof}
\begin{rema}
  By a result of Salberger announced in the MSRI lecture mentioned in Remark \ref{history of I} (see also \cite[Corollary 4.2]{McKinnonRoth_2015}), when $X\hookrightarrow \mathbb P(\sE_K)$ is of degree $\delta$ with respect to $\O_{\mathbb P(\sE_K)}(1)$, we have the following lower bound of $I_X(H,\xi)$ introduced in Definition \ref{I(X)}, which is
\[ I_X(H,\xi)\geqslant\frac{d\vol(H)}{d+1}\sqrt[d]{\frac{\vol(H)}{\mu_\eta(X)}}\geqslant\frac{d}{d+1}\epsilon_\eta(H)\vol(H),\]
where the reduction of $\eta\in X(K)$ modulo $\p\in\spm\O_K$ is $\xi$ in the sense of Theorem \ref{semi-global determinant method}, $\mu_\eta(X)$ is the multiplicity of $\eta$ in $X$, and $\epsilon_\eta(H)$ is the Seshadri constant of $H$ at $\eta$. For the application in this case, we have
\begin{equation}\label{lower bound of I_X}
I_X(H,\xi)\geqslant\frac{d\vol(H)}{d+1}\sqrt[d]{\frac{\vol(H)}{\mu_\eta(X)}}=\frac{d\delta^{1+\frac{1}{d}}}{(d+1)},
\end{equation}
since the point $\eta$ is regular in $X$, and $\vol(H)=H^d=\delta$ by definition. Then by Theorem \ref{R(E) by GIT}, we have
\[\mathcal R_{\xi_j}(F_D)\geqslant\frac{d\delta^{1+\frac{1}{d}}}{(d+1)!}D^{d+1}+O_{d,\delta}(D^d),\]
which is the same as that obtained in Proposition \ref{explicit lower bound of R(E)} and some other former results, for example, in \cite[Main Lemma 2.5]{Salberger07}.
\end{rema}
\section{The number of auxiliary hypersurfaces}
In this section, for a closed integral subscheme $X$ of $\mathbb P(\sE_K)$, we will give an upper bound of the number of hypersurfaces which cover $S(X;B)=\left\{\xi\in X(K)|\;H_K(\xi)\leqslant B\right\}$ but do not contain the generic point of $X$. The height function $H_K(\ndot)=\exp\left([K:\Q]h(\ndot)\right)$, and $h(\ndot)$ follows the definition \eqref{arakelov height} by Arakelov theory with respect to the Hermitian vector bundle $\overline{\sE}$ over $\spec\O_K$.
\subsection{Application of the asymptotic estimate of $\mathcal R_{\xi_j}(F_D)$}
Let $\overline{\sE}$ be a Hermitian vector bundle of rank $n+1$ over $\spec\O_K$, $X$ be a closed integral subscheme of $\mathbb P(\sE_K)$, and $\mathscr X$ be the Zariski closure of $X$ in $\mathbb P(\sE)$. Let $\p\in\spm\O_K$, and $\xi\in\mathscr X(\f_\p)$. We denote by $S(X;B,\xi)$ the subset of $S(X;B)$ whose reduction modulo $\p$ is $\xi$ in the sense of Theorem \ref{semi-global determinant method}.
\begin{lemm}\label{I_X(H,xi) same}
  We keep all the notations and conditions in Theorem \ref{semi-global determinant method}. If $\bigcap\limits_{j\in J}S(X;B,\xi_j)$ is not empty, then for every $j\in J$, all $\left\{I_X(H,\xi_j)\right\}_{j\in J}$ are equal, where $I_X(H,\xi_j)$ is defined in Definition \ref{I(X)}.
\end{lemm}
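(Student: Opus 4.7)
The plan is to observe that if $\eta \in \bigcap_{j\in J} S(X;B,\xi_j)$ is nonempty, then a single rational point $\eta \in X(K)$ specializes modulo every $\p_j$ to the corresponding $\xi_j$. This point can then serve as a common witness used to compute all the invariants $I_X(H,\xi_j)$ simultaneously via the same geometric data. Since $I_X(H,\xi_j)$ depends (as established in \S\ref{depending on reduction} together with Definition \ref{I(X)}) only on the specialization class, but by that same principle the value is unambiguously read off from any rational point specializing to the given $\xi_j$, we are free to compute each $I_X(H,\xi_j)$ using our common $\eta$.

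Concretely, I would fix $\eta \in \bigcap_{j\in J} S(X;B,\xi_j)$, consider the blow-up $\pi:\widetilde X \to X$ at $\eta$ with exceptional divisor $E$, and note that this geometric datum $(\widetilde X,\pi,E)$ does not depend on $j$. By Definition \ref{I(X)} applied to the specialization of $\eta$ modulo $\p_j$, we have
\[
I_X(H,\xi_j)\;=\;\int_0^\infty \vol\bigl(\pi^*H - \lambda E\bigr)\,d\lambda
\]
for every $j \in J$. The right-hand side is a single real number independent of $j$, so all the $I_X(H,\xi_j)$ coincide with this common value.

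The only point requiring a little care is that Definition \ref{I(X)} tacitly invokes the well-definedness discussed in \S\ref{depending on reduction}, which itself rests on the identity $h^0(\widetilde X_1,D\pi_1^*H-mE_1)=h^0(\widetilde X_2,D\pi_2^*H-mE_2)$ for any two rational points with the same regular specialization. In our setting we are using this principle in the degenerate direction: rather than comparing two choices of $\eta$ for a fixed $\xi$, we are comparing one choice of $\eta$ across several $\xi_j$'s. Thus no genuine obstacle arises, and the lemma follows essentially by unfolding the definition; the real work has already been done in establishing that $I_X(H,\xi)$ is an invariant of the specialization class.
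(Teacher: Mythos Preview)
Your proof is correct and follows essentially the same approach as the paper's: both arguments use a common rational point $\eta\in\bigcap_{j\in J}S(X;B,\xi_j)$ as the witness for computing every $I_X(H,\xi_j)$ via the single blow-up at $\eta$, invoking the well-definedness established in \S\ref{depending on reduction} (via Propositions~\ref{R(E)-kernel} and~\ref{kernel-blowup}). Your write-up is simply more explicit about the ``degenerate direction'' in which the well-definedness is being applied.
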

\begin{proof}
  By Proposition \ref{R(E)-kernel}, the invariant $I_X(H,\xi_j)$ only depends on its specialization. Then we obtain the assertion from Proposition \ref{kernel-blowup} directly.
\end{proof}
We keep all the notations and conditions in Lemma \ref{I_X(H,xi) same}, and we define
\begin{equation}\label{I_X(H,xi_J)}
  I_X(H,\xi_J)=I_X(H,\xi_j)
\end{equation}
for all $j\in J$. Then by the asymptotic estimate of $\mathcal R_\xi(F_D)$, we have the result below deduced from Theorem \ref{semi-global determinant method}.
\begin{theo}\label{semi-global determinant method mid}
  We keep all the notations in Theorem \ref{semi-global determinant method}. Let $\{\p_j\}_{j\in J}$ be a family of maximal ideals of $\O_K$ and $B,\epsilon>0$. For every $j\in J$, let $\xi_j\in \mathscr X(\f_{\p_j})$ be a regular point. Let $I_X(H,\xi_J)$ be defined in \eqref{I_X(H,xi_J)} (by Lemma \ref{I_X(H,xi) same} it is well defined). If the inequality
  \begin{equation}\label{hypothesis of step 1}
\sum_{j\in J}\log N(\p_j)\geqslant (1+\epsilon)\left(\log B +[K:\Q]\frac{\log\left((n+1)(d+1)\right)}{2}\right)\frac{\delta}{I_X(H,\xi_J)}
  \end{equation}
  is verified, then there exists a hypersurface of degree $O_{d,\delta,\epsilon}(1)$ in $\mathbb P(\sE_K)$, which contains the set $\bigcap\limits_{j\in J}S(X;B,\xi_j)$ but do not contain the generic point of $X$.
\end{theo}
\begin{proof}
  We only need to prove the assertion for the case when $\bigcap\limits_{j\in J}S(X;B,\xi_j)\neq\emptyset$. Let $D\in\mathbb N^+$, and we suppose that such there does not exist such a hypersurface of degree $D$. By Theorem \ref{semi-global determinant method}, we have
  \begin{equation}\label{first step of determinant method}
    \frac{\log B}{[K:\Q]}\geqslant\frac{\wmu(\F_D)}{D}-\frac{\log r_1(D)}{2D}+\sum_{j\in J}\frac{\mathcal R_{\xi_j}(F_D)}{Dr_1(D)}\frac{\log N(\p_j)}{[K:\Q]}.
  \end{equation}

 For every $j\in J$, $\xi_j$ is regular in $\mathscr X$, and we have
  \[r_1(D)=\frac{\delta}{d!}D^d+O_{d,\delta}(D^{d-1}).\]
  Then we apply Theorem \ref{R(E) by GIT} by combining the above two facts, and we obtain that there exists a constant $C(d,\delta)$ depending on $d$ and $\delta$, such that
  \[\frac{\mathcal R_{\xi_j}(F_D)}{Dr_1(D)}\geqslant \frac{I_X(H,\xi_J)}{\delta}+\frac{C(d,\delta)}{D}\]
  is verified for each $D\geqslant1$ and $j\in J$. By \cite[\S1.2]{Chardin89}, we have
  \[r_1(D)\leqslant\delta{D+d\choose D}\leqslant\delta(d+1)^D.\]
  We combine the above arguments and the trivial lower bound of $\wmu(\F_D)$ introduced at \eqref{trivial lower bound of F_D}. From the inequality \eqref{first step of determinant method}, we have
  \[\frac{\log B}{[K:\Q]}\geqslant-\frac{1}{2}\log(n+1)-\frac{\log\delta}{2D}-\frac{1}{2}\log(d+1)+\left(\frac{I_X(H,\xi_J)}{\delta}+\frac{C(d,\delta)}{D}\right)\sum_{j\in J}\frac{\log N(\p_j)}{[K:\Q]},\]
  and we obtain
  \begin{eqnarray*}
    & &\left(\frac{I_X(H,\xi_J)}{\delta}\sum_{j\in J}\frac{\log N(\p_j)}{[K:\Q]}-\frac{\log B}{[K:\Q]}-\frac{1}{2}\log(n+1)-\frac{1}{2}\log(d+1)\right)D\\
    &\leqslant&\left(-\frac{\log\delta}{2}+C(d,\delta)\right)\sum_{j\in J}\frac{\log N(\p_j)}{[K:\Q]}.
  \end{eqnarray*}
  By the hypothesis \eqref{hypothesis of step 1}, the left hand side of the above inequality is larger than or equal to
  \[\frac{\epsilon}{1+\epsilon}\cdot\frac{I_X(H,\xi_J)}{\delta}\sum_{j\in J}\frac{\log N(\p_j)}{[K:\Q]}D,\]
  which implies
  \[D\leqslant(\epsilon^{-1}+1)\frac{\delta}{I_X(H,\xi_J)}\left(-\frac{\log\delta}{2}+C(d,\delta)\right).\]
  By \eqref{lower bound of I_X} and the fact that all $\xi_j$ is regular in $\mathscr X_{\f_{\p_j}}$ for each $j\in J$, there exists a lower bound of $I_X(H,\xi_J)$ which only depends on the $d$ and $\delta$. Then we obtain a contradiction, which terminates the proof.
\end{proof}
The following result can be considered as a generalization of \cite[Main Lemma 16.3.1]{Salberger2015}.
\begin{coro}\label{semi-global determinant method 2}
  We keep all the notations and conditions in Theorem \ref{semi-global determinant method mid}. Let
  \begin{equation}\label{inf of I_X}
    I_X(H)=\inf_{\begin{subarray}{c}\eta\in S(X^\mathrm{reg};B)\end{subarray}}\{I_X(H,\eta)\}.
  \end{equation}
  If the inequality
    \begin{equation*}
    \sum_{j\in J}\log N(\p_j)\geqslant (1+\epsilon)\left(\log B +\frac{1}{2}[K:\Q]\log\left((n+1)(d+1)\right)\right)\frac{\delta}{I_X(H)}
  \end{equation*}
  is verified, then there exists a hypersurface of degree $O_{n,\delta,\epsilon}(1)$ in $\mathbb P(\sE_K)$, which contains $\bigcap\limits_{j\in J}S(X;B,\xi_j)$ but does not contain the generic point of $X$.
\end{coro}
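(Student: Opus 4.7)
The plan is to deduce this corollary as an essentially immediate consequence of Theorem \ref{semi-global determinant method mid}. The only gap between the two statements is that the theorem's hypothesis involves the invariant $I_X(H,\xi_J)$ attached to the particular reduction data, whereas the corollary replaces it by the uniform infimum $I_X(H)$; since $I_X(H)\leqslant I_X(H,\xi_J)$ whenever the latter is defined, the corollary's hypothesis is at least as strong as the theorem's, so the whole argument reduces to verifying this comparison and handling the degenerate case.

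First I would dispose of the case $\bigcap_{j\in J}S(X;B,\xi_j)=\emptyset$, for which any hyperplane section not containing the generic point of $X$ trivially works. In the remaining case, I would pick a rational point $\eta$ inside the intersection. Its associated $\O_K$-section $\mathcal P\in\mathscr X(\O_K)$ specializes at each $\p_j$ to $\xi_j$, which is by hypothesis regular in $\mathscr X$. Since regularity of $\O_{\mathscr X,\xi_j}$ is preserved under the localization producing $\O_{X,\eta}$ at the generic point of the section, $\eta$ is regular in $X$; in particular $\eta\in S(X^{\mathrm{reg}};B)$, so $I_X(H,\eta)$ is well-defined. Lemma \ref{I_X(H,xi) same} together with \eqref{I_X(H,xi_J)} then gives $I_X(H,\eta)=I_X(H,\xi_J)$, and the infimum definition \eqref{inf of I_X} yields $I_X(H)\leqslant I_X(H,\xi_J)$.

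Consequently $\delta/I_X(H,\xi_J)\leqslant\delta/I_X(H)$, and the hypothesis of the corollary implies the hypothesis \eqref{hypothesis of step 1} of Theorem \ref{semi-global determinant method mid} for this common value $I_X(H,\xi_J)$. Applying that theorem produces a hypersurface of degree $O_{d,\delta,\epsilon}(1)$ in $\mathbb P(\sE_K)$ containing $\bigcap_{j\in J}S(X;B,\xi_j)$ but not the generic point of $X$; since $d\leqslant n$, the degree bound rewrites as $O_{n,\delta,\epsilon}(1)$, as required.

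The only step requiring any care is the transfer of regularity from $\xi_j$ in $\mathscr X$ to $\eta$ in $X$, but this is a routine consequence of the fact that regularity of Noetherian local rings is preserved under localization, and it is already implicit in the discussion of \S\ref{depending on reduction} where $I_X(H,\xi)$ is shown to depend only on the regular reduction class. No other genuine obstacle is anticipated.
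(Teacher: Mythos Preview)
Your argument is correct and follows the same route as the paper: the paper's proof simply observes that $\dfrac{\delta}{I_X(H)}\geqslant\dfrac{\delta}{I_X(H,\xi_J)}$ by the definition \eqref{inf of I_X} and then invokes Theorem \ref{semi-global determinant method mid} directly. Your version just unpacks why this inequality holds (pick $\eta$ in the intersection, note its regularity via generization from the regular $\xi_j$, apply Lemma \ref{I_X(H,xi) same}), which the paper leaves implicit.
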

\begin{proof}
  By definition \eqref{inf of I_X}, we have
  \[\frac{\delta}{I_X(H)}\geqslant\frac{\delta}{I_X(H,\xi_J)},\]
  where $I_X(H,\xi_J)$ is defined in the assertion of Theorem \ref{semi-global determinant method mid}. Then we obtain this result from \eqref{hypothesis of step 1} in Theorem \ref{semi-global determinant method mid} directly.
\end{proof}

\subsection{Bertrand's postulate of number fields}
In order to apply Theorem \ref{semi-global determinant method mid} and Corollary \ref{semi-global determinant method 2}, we need some estimate about the distribution of prime ideals of rings of algebraic integers. In fact, we need an analogue of Bertrand's postulate for the case of number fields, which follows.
\begin{lemm}\label{Bertrand's postulate}
  Let $K$ be a number field, and $\O_K$ be the ring of integers of $K$. There exists a constant $\alpha(K)\geqslant2$ depending on $K$, such that for all number $N_0\geqslant1$, there exists at least one maximal ideal $\p$ of $\O_K$, such that $N_0<N(\p)\leqslant\alpha(K)N_0$.
\end{lemm}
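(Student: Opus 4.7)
The plan is to deduce this as a direct consequence of Landau's prime ideal theorem for $K$, which states the asymptotic $\pi_K(x)\sim x/\log x$ as $x\to\infty$, where $\pi_K(x)$ denotes the number of maximal ideals $\p\subset\O_K$ with $N(\p)\leqslant x$. From this one immediately obtains $\pi_K(2N_0)-\pi_K(N_0)\sim N_0/\log N_0$, which is strictly positive once $N_0$ exceeds some threshold $N_1(K)$ depending only on $K$. Hence whenever $N_0\geqslant N_1(K)$, the interval $(N_0,2N_0]$ already contains the norm of at least one maximal ideal of $\O_K$, so the choice $\alpha=2$ suffices in this ``tail'' range.

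For the bounded range $1\leqslant N_0<N_1(K)$, I would argue by hand. List the norms of maximal ideals of $\O_K$ in strictly increasing order as $q_1<q_2<\cdots$, and set $q_0=1$ by convention; the sequence is unbounded since every rational prime is divisible by at least one maximal ideal of $\O_K$. For any real $N_0\geqslant 1$, let $i=i(N_0)$ be the unique index with $q_i\leqslant N_0<q_{i+1}$; then $q_{i+1}$ is the smallest prime ideal norm strictly greater than $N_0$, and $q_{i+1}/N_0\leqslant q_{i+1}/q_i$. Only finitely many indices $i$ satisfy $q_i\leqslant N_1(K)$, so
\[
\alpha(K)\;=\;\max\Bigl\{\,2,\ \max_{i\geqslant 0,\ q_i\leqslant N_1(K)}\frac{q_{i+1}}{q_i}\,\Bigr\}
\]
is a finite constant $\geqslant 2$ depending only on $K$. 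By construction, for every $N_0\geqslant 1$ the interval $(N_0,\alpha(K)N_0]$ contains some $q_{i+1}$, hence the norm of at least one maximal ideal of $\O_K$.

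The main obstacle is really only the invocation of Landau's prime ideal theorem, a deep classical result generalizing the prime number theorem to number fields; once that tool is admitted, the remainder is an elementary finite check on the initial segment of norms. An alternative route would be to use the Chebotarev density theorem to produce totally split rational primes in $(N_0,2N_0]$ for $N_0$ large and take a maximal ideal of $\O_K$ of norm equal to that rational prime, but the structure and difficulty of the proof would be essentially the same.
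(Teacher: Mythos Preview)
Your argument is correct. Landau's prime ideal theorem $\pi_K(x)\sim x/\log x$ indeed yields $\pi_K(2N_0)-\pi_K(N_0)\sim N_0/\log N_0$, which is eventually positive, and the finitely many small $N_0$ are absorbed by enlarging $\alpha(K)$ exactly as you describe. This is a clean and self-contained route to the statement.

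By way of comparison, the paper does not supply its own proof of this lemma at all: it simply refers the reader to results of Lagarias--Odlyzko and Serre (under GRH) and to Winckler's thesis (unconditionally). Those references go through effective versions of the Chebotarev density theorem, which is heavier machinery than you invoke but has the advantage of yielding explicit, computable constants $\alpha(K)$ in terms of invariants of $K$. Your approach, by contrast, gives the existence of $\alpha(K)$ with essentially no control on its size (since the threshold $N_1(K)$ coming from the bare asymptotic of Landau's theorem is ineffective). For the purposes of the present paper this is harmless, because only the existence of some $\alpha(K)$ is used downstream; but it is worth noting that your proof and the cited references differ in effectivity.
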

We refer to \cite{Lagarias-Odlyzko} or \cite[Th\'eor\`eme 2]{Serre-postulat} for a proof by admitting the Generalized Riemann Hypothesis, and to \cite[Th\'eor\`eme 1.7]{Winckler_these} without admitting it.
\subsection{Complexity of the singular locus}
Let $\overline{\sE}$ be a Hermitian vector bundle of rank $n+1$ over $\spec\O_K$, $X$ be a closed integral subscheme of $\mathbb P(\sE_K)$ of degree $\delta$ and dimension $d$. In order to give an upper bound of the number of auxiliary hypersurfaces which cover $S(X;B)$ but do not contain the generic point of $X$, we divide $S(X;B)$ into two part: the part of regular points and the part of singular points. In this part, we will deal with the singular part $S(X^{\mathrm{sing}};B)$.

By \cite[Theorem 3.10]{Chen1} (see also \cite[\S 2.6]{Chen2}), we have the following control to the complexity of the singular locus.
\begin{prop}\label{covering singular locus}
    Let $\overline{\sE}$ be a Hermitian vector bundle of rank $n+1$ over $\spec\O_K$, and $X$ be a closed integral subscheme of $\mathbb P(\sE_K)$, which is of degree $\delta$ and of dimension $d$. Then there exists a hypersurface of degree $(\delta-1)(n-d)$ in $\mathbb P(\sE_K)$ which covers $S(X^{\mathrm{sing}};B)$ but do not contain the generic point of $X$.
\end{prop}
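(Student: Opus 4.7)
The plan is to apply the Jacobian criterion in a quantitative way and then invoke the irreducibility of $X$ to exhibit an explicit hypersurface. First I would fix a system of homogeneous generators of the saturated ideal $I(X) \subset K[T_0, \ldots, T_n]$ of $X$, chosen of degree at most $\delta$; one can appeal, for instance, to Chardin's bound already used in Theorem \ref{semi-global determinant method mid} (or more classically to Mumford's regularity estimate) to guarantee that such generators exist. Let $J$ denote the Jacobian matrix formed by the partial derivatives of these generators. By the Jacobian criterion for closed subschemes of projective space, $X^{\mathrm{sing}}$ is cut out set-theoretically on $X$ by the vanishing of all $(n-d) \times (n-d)$ minors of $J$, since $X$ has pure dimension $d$ and is thus of codimension $n-d$ in $\mathbb P(\sE_K)$.

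Each partial derivative of a form of degree $\leqslant \delta$ has degree $\leqslant \delta-1$, so every $(n-d)\times(n-d)$ minor of $J$ is a homogeneous polynomial in $T_0, \ldots, T_n$ of degree at most $(\delta-1)(n-d)$. The second step is to argue that at least one such minor $M$ does not vanish identically on $X$. This is where irreducibility of $X$ enters: the regular locus $X^{\mathrm{reg}}$ is a nonempty open subset of $X$ by \cite[Corollary 8.16, Chap.~II]{GTM52}, so the $(n-d)\times(n-d)$ minors of $J$ cannot all vanish on $X$, for otherwise the Jacobian criterion would force every point of $X$ to be singular. Picking any such $M$, the hypersurface $V(M)$ contains $X^{\mathrm{sing}}$, hence in particular contains $S(X^{\mathrm{sing}};B)$, but does not contain the generic point of $X$. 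After, if necessary, multiplying $M$ by a suitable linear form to increase its degree to exactly $(\delta-1)(n-d)$ (the generic point will still not be contained provided we pick a hyperplane not containing $X$, which exists because $X$ is a proper subvariety of $\mathbb P(\sE_K)$), we obtain a hypersurface of the prescribed degree.

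The main subtlety, and the step I expect to be the actual obstacle, is the first one: ensuring that $I(X)$ admits a system of generators of degree $\leqslant \delta$ so that the Jacobian bookkeeping gives exactly $(\delta-1)(n-d)$ rather than some larger quantity. This is not formal and requires a genuine regularity-type input; in the proof I would defer to \cite[Theorem 3.10]{Chen1} (also described in \cite[\S2.6]{Chen2}), which already packages precisely this quantitative form of the Jacobian criterion for integral closed subschemes of projective space and so makes the argument above rigorous without any further work.
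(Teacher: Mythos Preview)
The paper does not prove this proposition at all; it simply records the statement and cites \cite[Theorem~3.10]{Chen1} and \cite[\S2.6]{Chen2}. Your final deferral to that same reference is therefore exactly what the paper does, and in that sense the proposal is acceptable.

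Your sketch of what lies behind that citation is, however, slightly off, and the misstep is worth correcting because it misidentifies where the difficulty sits. The assertion that the saturated ideal $I(X)$ admits generators in degrees $\leqslant\delta$ is neither what the Chardin reference \cite{Chardin89} gives (that result bounds the Hilbert function $r_1(D)$, not generator degrees) nor what Mumford's regularity estimate yields; bounding the regularity of a nondegenerate integral variety by its degree is essentially the Eisenbud--Goto conjecture, now known to fail in general. The argument actually packaged in \cite[Theorem~3.10]{Chen1} requires much less: one needs only $n-d$ forms $f_1,\ldots,f_{n-d}$ of degree exactly $\delta$ vanishing on $X$ whose Jacobian has rank $n-d$ at the generic point of $X$ (equivalently, $X$ is an irreducible component of their intersection). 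Such forms are produced as cones over generic linear projections $X\to\mathbb P^d$. The key observation is that for \emph{any} forms in $I(X)$ the rows of the Jacobian at a point $p\in X$ lie in the conormal space of $X$ at $p$, whose dimension is $<n-d$ precisely when $p$ is singular; hence every $(n-d)\times(n-d)$ minor of the Jacobian of $f_1,\ldots,f_{n-d}$ vanishes on $X^{\mathrm{sing}}$ whether or not the $f_i$ generate $I(X)$, and at least one such minor is nonzero on $X$ by the rank hypothesis. That minor already has degree exactly $(n-d)(\delta-1)$, so your multiplication-by-a-linear-form adjustment is unnecessary.
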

\subsection{Control of regular reductions}
Let $\p\in\spm\O_K$, $S(X^{\mathrm{reg}};B)$ be the subset of $S(X;B)$ consisting of regular points, and $S(X;B,\xi)$ be the subset of $S(X;B)$ whose reduction modulo $\p$ is $\xi$, where the operation modulo $\p$ follows the sense of Theorem \ref{semi-global determinant method}. We denote
\begin{equation}\label{S(X;B,p)}
S(X^{\mathrm{reg}};B,\p)=\bigcup_{\begin{subarray}{c}\xi\in\mathscr X(\f_\p)\\ \mu_\xi\left(\mathscr X\right)=1\end{subarray}}S(X;B,\xi).
\end{equation}
In other words, $S(X^{\mathrm{reg}};B,\p)$ is the subset of $S(X^{\mathrm{reg}};B)$ with regular reduction modulo $\p$.

Next, we will refer a result that $S(X^{\mathrm{reg}};B)$ can be covered by some $S(X^{\mathrm{reg}};B,\p)$ for some particular $\p\in\spm\O_K$. For this aim, we introduce the following constants original from \cite[Notation 19]{Chen2}. Let
\begin{eqnarray*}
  C_1&=&(d+2)\wmu_{\max}\left(\sym^\delta\left(\overline{\sE}^\vee\right)\right)+\frac{1}{2}(d+2)\log\rg\left(\sym^\delta \sE\right)\\
  & &\;+\frac{\delta}{2}\log\left((d+2)(n-d)\right)+\frac{\delta}{2}(d+1)\log(n+1),
\end{eqnarray*}
\begin{equation*}
  C_2=\frac{r}{2}\log \rg\left(\sym^\delta\sE\right)+\frac{1}{2}\log \rg\left(\wedge^{n-d}\sE\right)+\log\sqrt{(n-d)!}+(n-d)\log \delta,
\end{equation*}
and
\begin{equation}\label{constant C_3}
  C_3=(n-d)C_1+C_2.
\end{equation}
The above constant $C_1$ is original from \cite[(21)]{Chen1}, and $C_2$ is from \cite[Remark 3.9]{Chen1}. The constant $C_3$ firstly appeared at \cite[Theorem 3.10]{Chen1}, and we have
\begin{equation}\label{estimate of C_3}
  C_3\ll_{n,d}\delta.
\end{equation}
By the above notations, we state the following result.
\begin{lemm}[\cite{Chen2}, Lemma 4.1]\label{upper bound of regular reduction}
  Let $N_0>0$ be a real number and $r$ be the integral part of
  \begin{equation}\label{number of places for regular reductions}
    \frac{(n-d)(\delta-1)\log B+\left((n-d)h_{\overline{\O_{\mathbb P(\sE)}(1)}}(X)+C_3\right)[K:\Q]}{\log N_0}+1,
  \end{equation}
  where the constant $C_3$ is defined in \eqref{constant C_3}, and the height $h_{\overline{\O_{\mathbb P(\sE)}(1)}}(X)$ is defined in \eqref{definition of arakelov height}. If $\p_1,\ldots, \p_r$ are distinct maximal ideals of $\O_K$ such that $N(\p_i)>N_0$ is verified for every $i=1,\ldots,r$, then
    \[S(X^{\mathrm{reg}};B)=\bigcup_{i=1}^rS(X^{\mathrm{reg}};B,\p_i),\]
    where every $S(X^{\mathrm{reg}};B,\p_i)$ is defined in \eqref{S(X;B,p)}.
\end{lemm}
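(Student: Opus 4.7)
The plan is to argue by contradiction. Fix $P\in S(X^{\mathrm{reg}};B)$ with Zariski closure $\mathcal P\in\mathscr X(\O_K)$, and assume that the reduction of $\mathcal P$ modulo each $\p_i$, $i=1,\ldots,r$, is a singular point of $\mathscr X_{\f_{\p_i}}$. I will produce a global Hermitian section on $\mathscr X$ that vanishes on every singular reduction and whose Archimedean norms are controlled, then apply the product formula to $\mathcal P^*s$ to bound $\sum_i\log N(\p_i)$ from above in terms of $\log B$, $h_{\overline{\O_{\mathbb P(\sE)}(1)}}(X)$ and $C_3$, contradicting the defining choice of $r$ in \eqref{number of places for regular reductions}.

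The first and most substantial step is to invoke \cite[Theorem~3.10]{Chen1}: there exists a non-zero global section $s$ of $\overline{\O_{\mathbb P(\sE)}(1)}^{\otimes (n-d)(\delta-1)}|_{\mathscr X}$ such that \emph{(i)} $s$ vanishes along the singular locus of every geometric fibre of $\mathscr X\to\spec\O_K$, \emph{(ii)} $s$ does not vanish at the generic point of $\mathscr X$, and \emph{(iii)} the Archimedean sup-norms of $s$ satisfy
\[\sum_{v\in M_{K,\infty}}[K_v:\Q_v]\log\|s\|_{v,\sup}\leqslant\bigl((n-d)h_{\overline{\O_{\mathbb P(\sE)}(1)}}(X)+C_3\bigr)[K:\Q].\]
The construction of such an $s$ is essentially the whole difficulty of the proof: it arises from a Jacobian/resultant element inside $\sym^{\delta}(\overline{\sE}^\vee)\otimes\wedge^{n-d}\overline{\sE}$ whose sup-norm is controlled via the slope method, producing the constants $C_1,C_2$ in \S4 of \cite{Chen1} and hence $C_3$ as defined at \eqref{constant C_3}. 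Everything beyond this invocation is a routine arithmetic-geometric computation.

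For the second step, pull back $s$ along $\mathcal P$ to obtain a non-zero section $\mathcal P^*s$ of the Hermitian line bundle $\mathcal P^*\overline{\O_{\mathbb P(\sE)}(1)}^{\otimes(n-d)(\delta-1)}$ on $\spec\O_K$, whose Arakelov degree equals $(n-d)(\delta-1)[K:\Q]h(P)$ by the definition \eqref{arakelov height}. Property \emph{(i)} and the hypothesis give $\mathrm{ord}_{\p_i}(\mathcal P^*s)\geqslant 1$ for every $i$, while at each Archimedean $v$ one has $\|\mathcal P^*s\|_v\leqslant\|s\|_{v,\sup}$. The identity
\[\adeg\bigl(\mathcal P^*\overline{\O_{\mathbb P(\sE)}(1)}^{\otimes(n-d)(\delta-1)}\bigr)=\sum_{\p}\mathrm{ord}_\p(\mathcal P^*s)\log N(\p)-\sum_{v\in M_{K,\infty}}[K_v:\Q_v]\log\|\mathcal P^*s\|_v,\]
combined with \emph{(iii)} and the inequality $[K:\Q]h(P)\leqslant\log B$, therefore yields
\[\sum_{i=1}^r\log N(\p_i)\leqslant (n-d)(\delta-1)\log B+\bigl((n-d)h_{\overline{\O_{\mathbb P(\sE)}(1)}}(X)+C_3\bigr)[K:\Q].\]
Since $N(\p_i)>N_0$ for each $i$, the left-hand side strictly exceeds $r\log N_0$, contradicting the definition \eqref{number of places for regular reductions} of $r$. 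Consequently $P$ must have regular reduction modulo at least one $\p_i$, which proves the claimed equality $S(X^{\mathrm{reg}};B)=\bigcup_{i=1}^rS(X^{\mathrm{reg}};B;\p_i)$.
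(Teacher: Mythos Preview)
The paper does not prove this lemma itself; it is quoted from \cite[Lemma~4.1]{Chen2}. Your reconstruction is the standard argument and matches what is done there: manufacture an integral section of $\overline{\O(1)}^{\otimes (n-d)(\delta-1)}|_{\mathscr X}$ vanishing on the singular loci of all fibres and with Archimedean size bounded by $(n-d)h_{\overline{\O(1)}}(X)+C_3$, then apply the product formula to $\mathcal P^*s$ to bound $\sum_i\log N(\p_i)$.

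One step needs tightening. As stated, your conditions (i)--(iii) do not force $\mathcal P^*s\neq 0$: condition~(ii) says only that $s$ is not identically zero on $X_K$, which does not prevent it from vanishing at the particular regular point $P$; and since the zero locus of a single section is a divisor while $X_K^{\mathrm{sing}}$ generally has higher codimension, one cannot hope for $Z(s)\cap X_K=X_K^{\mathrm{sing}}$. What \cite[Theorem~3.10]{Chen1} in fact produces (this is already visible in the shape of $C_2$, with its factors $\rg(\wedge^{n-d}\sE)$ and $\sqrt{(n-d)!}$) is a finite \emph{family} of such sections---essentially the $(n-d)\times(n-d)$ Jacobian minors attached to a presentation of $\mathscr X$---whose \emph{common} zero locus on each fibre is the singular locus, and each of which obeys the same sup-norm bound. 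Since $P$ is regular in $X_K$, at least one section $s$ in this family has $s(P)\neq 0$; you then run your product-formula computation with that particular $s$. With this adjustment the argument is complete and agrees with \cite{Chen2}.
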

\subsection{An upper bound of the number of auxiliary hypersurfaces}
In this part, we will estimate the number of auxiliary hypersurfaces which cover $S(X;B)$ but do not contain the generic point of $X$. In fact, by Proposition \ref{covering singular locus}, we only need to consider the regular part $S(X^{\mathrm{reg}};B)$.

By \cite[Theorem 4.8]{Chen1} and \cite[Proposition 2.12]{Chen1}, the rational points with small height in $X$ can be covered by one hypersurface of degree $O_{n}(\delta)$ not containing the generic point of $X$, where the "small" height means that the bound $B$ is small compared with the height of $X$. We will use the above argument to deal with the points with small height and the method of Theorem \ref{semi-global determinant method} and Proposition \ref{semi-global determinant method 2} to deal with the regular points with large height, and combine it with Lemma \ref{upper bound of regular reduction}.
\begin{theo}\label{number of hypersurfaces}
Let $K$ be a number field and $\O_K$ be its ring of integers. Let $\overline{\sE}$ be a Hermitian vector bundle of rank $n+1$ over $\spec\O_K$, $X$ be a closed integral subscheme of $\mathbb P(\sE_K)$ of dimension $d$ and degree $\delta$, and $\epsilon>0$ be an arbitrary real number. Then there exists an explicit constant $C_4(\epsilon, \delta, n,d, K)$, such that for every $B\geqslant e^\epsilon$, the set $S(X;B)$ can be covered by no more than $C_4(\epsilon, \delta, n,d, K)B^{\frac{(1+\epsilon)d\delta}{I_X(H)}}$ hypersurfaces with degree of $O_{n,\delta,\epsilon}(1)$ which do not contain the generic point of $X$, where $I_X(H)$ is defined in \eqref{inf of I_X}.
\end{theo}
\begin{proof}
We divide this proof into two parts, the case of varieties with large height and the case of varieties with small height.

\textbf{Part 1. Case of large height varieties. - }Suppose that the inequality
\[h_{\overline{\O_{\mathbb P(\sE)}(1)}}(X)>\frac{(2d+2)^{d+1}}{d!}\delta\left(\frac{\log B}{[K:\Q]}+\frac{3}{2}\log(n+1)+2^d\right)\]
 is verified, where $h_{\overline{\O_{\mathbb P(\sE)}(1)}}(X)$ is defined in \eqref{definition of arakelov height}. Then by \cite[Theorem 4.8]{Chen1} and \cite[Proposition 2.12]{Chen1} (see also \S 2.1 and \S 2.3 of \cite{Chen2}), there exists a hypersurface in $\mathbb P(\sE_K)$ of degree $2(n-d)(\delta-1)+d+2$ which covers $S(X;B)$ but does not contain the generic point of $X$.

\textbf{Part 2. Case of small height varieties. - } Now we suppose that the inequality
  \[h_{\overline{\O_{\mathbb P(\sE)}(1)}}(X)\leqslant\frac{(2d+2)^{d+1}}{d!}\delta\left(\frac{\log B}{[K:\Q]}+\frac{3}{2}\log(n+1)+2^d\right)\]
  is verified. Let
  \[\log N_0=(1+\epsilon)\left(\log B +\frac{1}{2}[K:\Q]\left(\log(n+1)+\log(d+1)\right)\right)\frac{\delta}{I_X(H)},\]
  and $r$ be the positive integer defined in \eqref{number of places for regular reductions} of Lemma \ref{upper bound of regular reduction}. In this case, we have
  \[r\leqslant \frac{A_1\log B+A_2}{\log N_0}+1,\]
  where the constants are
    \[A_1=(n-d)(\delta-1)+\frac{(2d+2)^{d+1}}{d!}(n-d)\delta,\]
  and
  \[A_2=[K:\Q]\left(C_3+\frac{(2d+2)^{d+1}}{d!}\delta\left(\log(n+1)+\frac{1}{2}\log(d+1)+2^d\right)\right)\]
  with the constant $C_3$ is defined in \eqref{constant C_3}. By the assumption $\log B\geqslant\epsilon$, we obtain $r\leqslant A_3$, where
  \[A_3=\frac{I_X(H)}{\delta}\left(A_1+\epsilon^{-1}A_2\right)+1.\]

  By Bertrand's postulate (cf. Lemma \ref{Bertrand's postulate}), there exists a family of maximal ideals $\p_1,\ldots,\p_r$ of $\O_K$, such that
  \[\alpha(K)^{i-1}N_0\leqslant N(\p_i)\leqslant\alpha(K)^iN_0\]
  for every $i=1,\ldots,r$, where the constant $\alpha(K)\geqslant2$ depends only on the number field $K$.

  For each $\p_i$, we have
  \[\#\mathscr X\left(\f_{\p_i}\right)\leqslant\delta\left(N(\p_i)^d+\cdots+1\right)\leqslant\delta(d+1)N(\p_i)^d\leqslant\delta(d+1)\alpha(K)^{di}N_0^d,\]
  and then we obtain the following upper bound of the number of auxiliary hypersurfaces which cover $S_1(X;B)$ but do not cover the generic point of $X$. The upper bound mentioned above is
  \begin{eqnarray*}
    \sum_{i=1}^r\#\mathscr X(\f_{\p_i})&\leqslant&\delta(d+1)N_0^d\sum_{i=1}^r\alpha(K)^{di}\\
    &=&\delta(d+1)N_0^d\frac{\alpha(K)^d(\alpha(K)^{rd}-1)}{\alpha(K)^d-1}\\
    &\leqslant& C'_4B^{\frac{(1+\epsilon)d\delta}{I_X(H)}},
  \end{eqnarray*}
where the constant
\begin{eqnarray}\label{constant C''_4}
C'_4&=&\delta(d+1)\frac{\alpha(K)^d(\alpha(K)^{A_3d}-1)}{\alpha(K)^d-1}\left((d+1)(n+1)\right)^{\frac{(1+\epsilon)[K:\Q]d\delta}{2I_X(H)}}\nonumber\\
&\leqslant&\delta(d+1)\frac{\alpha(K)^d(\alpha(K)^{A_3d}-1)}{\alpha(K)^d-1}\left((d+1)(n+1)\right)^{\frac{1}{2}(1+\epsilon)[K:\Q](d+1)\delta^{-\frac{1}{d}}}\nonumber\\
&=:&C''_4(\epsilon, \delta, n,d, K).
\end{eqnarray}
In the above inequality, the second line is from the lower bound of $I_X(H)$ provided at \cite[Corollary 4.2]{McKinnonRoth_2015} (see \eqref{lower bound of I_X} for this lower bound in our application) and the definition of $I_X(H)$ at \eqref{inf of I_X}. Then we obtain the assertion by Corollary \ref{semi-global determinant method 2}.

\textbf{Conclusion. - }By the above argument, we obtain the result after combining it with Proposition \ref{covering singular locus}, where we choose the constant $C_4(\epsilon, \delta, n,d, K)=C''_4(\epsilon, \delta, n,d, K)+1$ introduced at \eqref{constant C''_4}.
\end{proof}
\begin{rema}
  In the proof of Theorem \ref{number of hypersurfaces}, by the fact that $A_1\ll_{n,d}\delta$ and $A_2\ll_{n,d}\delta$, we have $A_3\ll_{n,d,\epsilon}\delta^{1+\frac{1}{d}}$, we obtain
  \[\log C_4(\epsilon, \delta, n,d, K)\ll_{n,K,\epsilon}\delta^{1+\frac{1}{d}},\]
  since we have $1\leqslant d\leqslant n-1$. But the above estimate of $C_4(\epsilon, \delta, n,d, K)$ is valueless unless we are able to obtain an explicit estimate of the degree of auxiliary hypersurfaces.
\end{rema}
\begin{rema}
  In Theorem \ref{number of hypersurfaces}, we do not give an explicit upper bound of the degree of auxiliary hypersurfaces. The main obstruction is that in Theorem \ref{R(E) by GIT}, when we estimate $\mathcal R_{\xi_j}(F_D)$, until the author's knowledge, we cannot find an explicit lower bound of $\dim H^0\left(X,\mathscr L^{\otimes m}\right)$ for arbitrary line bundle $\mathscr L$. If $\mathscr L$ is ample, see \cite[Page 92]{Kollar07} for such an explicit lower bound. So by the strategy of this article, we are not able to control the dependence of $S(X;B)$ on the degree of $X$ due to the limit of the author's ability.
\end{rema}
\backmatter

\bibliography{liu}
\bibliographystyle{smfplain}

\end{document}